\newcommand{\eat}[1]{ }
\def\ie{{\it i.e.},~}
\def\eg{{\it e.g.},~}
\def\etal{{\it et al.}~}
\def\TR{{\mathsf{T}}}
\def\E{\mathbb{E}}
\def\one{\mathds{1}}
\def\Poisson{\mathrm{Poisson}}
\def\Var{\mathrm{Var}}
\def\Binomial{\mathrm{Binomial}}
\def\event{{\mathcal E}}
\def\eps{\epsilon}
\def\G{{\mathcal G}}
\def\T{{\mathcal T}}
\newcommand{\Binom}{\operatorname{Binomial}}
\newcommand{\Pois}{\operatorname{Poisson}}
\newtheorem{theorem}{Theorem}
\newtheorem{question}{Question}
\newtheorem{lemma}{Lemma}
\newtheorem{definition}{Definition}
\newtheorem{remark}{Remark}
\theoremstyle{plain}
\newtheorem{proposition}{Proposition}
\newtheorem{conjecture}{Conjecture}
\theoremstyle{definition}
\newtheorem{algorithm}{Algorithm}
\begin{document}

\title{Global and Local Information in Clustering Labeled Block Models}
\author{Varun Kanade\thanks{University of California, Berkeley. This author is
supported by a Simons Postdoctoral Fellowship. Email:
\url{vkanade@eecs.berkeley.edu}}
\and
Elchanan Mossel\thanks{University of California, Berkeley. This author
acknowledges the support of NSF (grants DMS 1106999 and CCF 1320105) and ONR
(DOD ONR grant N000141110140) Email: \url{mossel@stat.berkeley.edu}}
\and
Tselil Schramm\thanks{University of California, Berkeley. This material is
based upon work supported by a Berkeley Chancellor's Fellowship and the National
Science Foundation Graduate Research Fellowship Program under Grant No. DGE
1106400. Email:
\url{tschramm@cs.berkeley.edu}}}

\maketitle

\begin{abstract}
The stochastic block model is a classical cluster-exhibiting random graph model
that has been widely studied in statistics, physics and computer science. In
its simplest form, the model is a random graph with two equal-sized clusters,
with intra-cluster edge probability $p$, and inter-cluster edge probability
$q$. We focus on the sparse case, \ie $p, q = O(1/n)$, which is practically
more relevant and also mathematically more challenging. A conjecture of
Decelle, Krzakala, Moore and Zdeborov\'{a}, based on ideas from statistical
physics, predicted a specific threshold for clustering. The negative direction
of the conjecture was proved by Mossel, Neeman and Sly (2012), and more
recently the positive direction was proven independently by Massouli\'{e} and
Mossel, Neeman, and Sly.

In many real network clustering problems, nodes contain information as well. We
study the interplay between node and network information in clustering by
studying a \emph{labeled} block model, where in addition to the edge
information, the true cluster labels of a small fraction of the nodes are
revealed. In the case of two clusters, we show that below the threshold, a
small amount of node information does not affect recovery. On the other hand,
we show that for any small amount of information efficient local clustering is
achievable as long as the number of clusters is sufficiently large (as a
function of the amount of revealed information).

%
\end{abstract}

\newpage

\section{Introduction}
\label{sec:intro}
The stochastic block model is one of the most popular models for networks with
clusters.  The model has been extensively studied in statistics~\cite{HLL:1983,
SN:1997, BC:2009}, computer science (where it is called the planted partition
problem)~\cite{DF:1989,JS:1998,CK:2001,McSherry:2001} and theoretical
statistical physics~\cite{DKMZ:2011, ZKRZ:2012, DKMZ:2011a}.

The simplest block model has $k$ clusters of equal size, and is generated as
follows. Starting with $n$ nodes, each node $v$ is randomly assigned a label
$\sigma_v$ from  the set $\{1, \ldots, k\}$. For each pair of nodes, $(u, v)$,
if their labels are identical an edge is added between them with probability
$p$, otherwise an edge is added with probability $q$.  Often the case when $p >
q$ is considered, and the question of interest is understanding how large $p -
q$ must be for correct clusters recovery to be possible.  In the recovery
problem the input consists of the unlabeled graph and the desired output is a
partition of the graph.

Real world networks are typically sparse. Thus, an interesting setting in the
block model is when $p$ and $q$ are in $O(1/n)$.  Here, it is more convenient
to parametrize the problem by setting $p = a/n$ and $q = b/n$, where $a,b$ are
constants.  In the sparse setting, exact recovery is impossible as the
resulting graph will have isolated nodes. Moreover, it is easy to see that even
nodes with constant degree cannot be classified accurately given all other
nodes in the graph. Thus the goal is to find a partition that has non-trivial
correlation with the original clusters (up to permutation of cluster labels).
This has sometimes been referred to as the {\em cluster detection} problem (see
e.g.  \cite{DKMZ:2011}); throughout the paper we refer to it as the {\em
cluster recovery} problem (though note that the goal is not to recover every
cluster with probability 1).

General results of Coja-Oghlan~\cite{Coja-Oghlan:2010} imply that it is
possible to identify a partition that is correlated with the true hidden
partition when $(a - b)^2 \geq C k^4 (a + (k-1) b)$.  A beautiful physics paper
by Decelle \etal\cite{DKMZ:2011} conjectured that the recovery problem is
feasible for the case of two clusters when $(a - b)^2 > 2(a + b)$ and
impossible when $(a -b)^2 < 2(a +b)$. The non-reconstructability in the case
where $(a -b)^2 < 2(a + b)$ was proved by Mossel, Neeman and
Sly~\cite{MNS:2012}, and more recently the same authors~\cite{MNS:2013b} and
Massouli\'{e}~\cite{Massoulie:2013} independently showed that recovery is
possible when $(a - b)^2 > 2(a + b)$.

\subsection{The labeled stochastic block model}

The aforementioned results along with previous results for denser block models
provide a detailed picture of recovery in the stochastic block model.  However,
the model they consider is idealized and does not capture many aspects of real
network problems. One such aspect is that in many realistic settings, node
label information is available for some of the nodes.  For example, in social
networks, the group label of some individuals (nodes) is known. In metabolic
networks, the function of some of the nodes may be known.  Indeed, there has
been much recent work in the machine learning and applied networks communities
on combining node and network information (see for
example~\cite{ChWeSc:02,BaBaMo:02,BaBiMo:04}).  There are several ways in which
node and edge information can be incorporated; in real applications nodes and
edges contain rich information which is noisy, but correlated with the node's
``true" label and with the ``similarity" of pairs of nodes.

In this paper, we study a simple model which incorporates both node and edge
information which we call the \emph{labeled} stochastic block model. This
model has been considered previously in the physics
literature~\cite{DKMZ:2011,vSMGE:2013,AvSG:2010}. In addition to having the
unlabeled graph as an input, a {\em small} random fraction of the nodes' labels
are also provided as input to the clustering algorithm.

\subsection{The big effect of a small number of node labels}

It is easy to see that even a vanishing fraction of node labels can play a major
role in the cluster recovery problem. For example, consider the denser case
where the clusters $C_1,\ldots,C_k$ can be identified
accurately~\cite{McSherry:2001}. Here, it is impossible to distinguish between
a clustering $C_1,\ldots,C_k$ where the nodes in cluster $C_i$ have label $i$
and the same clustering where the nodes in cluster $i$ have label $\pi(i)$ for
any permutation $\pi$ of the labels. However, note that for any $p>0$, given a
$p$-fraction of the node labels, it is possible to identify the permutation
$\pi$ correctly with high probability. It is natural to ask if the same result
holds in the sparse case, and it is not hard to see that a similar statement
can be made (see Proposition \ref{prop:permutation}).

The above observation shows that even a small amount of node information can
overcome the problems of symmetry in the stochastic block model. Another
problem of symmetry present in the unlabeled model is that there is no {\em
local algorithm} that can identify clusters better than random guessing.
Informally, a local algorithm determines the label of a node based solely on an
$o(\log n)$ neighborhood of that node, including possibly uniform independent
random variables attached to each node of the graph (see \ref{app:local} for a
formal definition and ~\cite{LyonsNazarov:11,HaLoSz:12} for examples). The
proof that a local algorithm cannot detect better than random guessing in this
case is folklore, and we include it here for completeness.  This limitation in
detection may be compared to the problem of finding independent sets, where
local algorithms can have non-trivial power (while still being less powerful
than global algorithms)~\cite{GamarnikSudan:14}.  It is therefore natural to
ask:

\medskip
\begin{question}
Does a vanishing fraction of labeled nodes allow {\em local algorithms} to
detect clusters? If so, when?
\end{question}

\medskip
An even a more direct question relates to the statistical power of revealing
some of the node labels. While it is clear that revealing a large fraction of
the node labels allows non-trivial recovery, it is far from clear what the
effect is when this fraction is vanishingly small. On the one hand, we might
expect by continuity that revealing a vanishing fraction of the node labels
will be identical in the limit to revealing no labels.  On the other hand, we
might imagine how a small fraction of the node labels could be used as  seeds
for recovery algorithms. We thus ask:

\medskip
\begin{question}
Does revealing a vanishing fraction of the node labels change the detectability
threshold? Does it change the fraction of correctly labeled nodes?
\end{question}
\medskip

The latter question was considered in recent work in statistical
physics~\cite{Moore:2013-misc,vSMGE:2013,AvSG:2010}.

\subsection{Our results}

To set the stage for our contributions, we begin with some observations
regarding the utility of local information. The proofs of these propositions are
straightforward (see Appendix~\ref{sec:appendix}), but they are useful for
establishing context of how information about (a small fraction of) node labels
may help. The first is that even a vanishingly small proportion of node labels aids
in breaking the symmetry and assigning labels to the cluster assignments. \medskip

\begin{figure}[t]
\begin{center}
	\includegraphics[width=\textwidth]{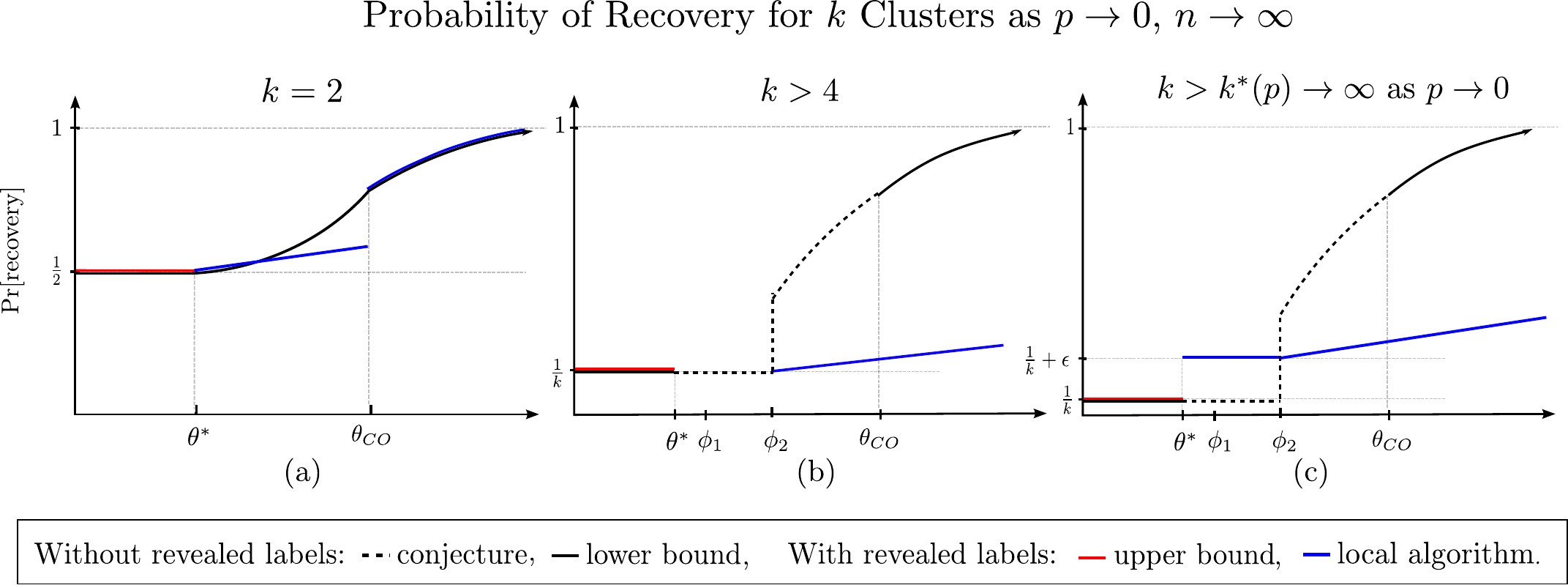}
	\end{center}
	\caption{
Previous work (black) and our contributions (colored).  The $x$-axes
represent the second eigenvalue of the corresponding broadcast process on the
coupled Galton-Watson Tree when the average degree is fixed--in simpler terms,
this is an increasing function of the ratio $\tfrac{a-b}{a}$.
	In all three cases, $\theta^*$ is the reconstruction threshold
	corresponding to the root reconstruction problem on trees, and
	$\theta_{CO}$ is the threshold of \cite{Coja-Oghlan:2010}.  In the
	two-cluster case (Subfigure (a)), $\theta^*$ corresponds exactly to the
	Kesten-Stigum bound of $(a-b)^2 < 2(a+b)$ \cite{DKMZ:2011, MNS:2012}. For
	the case of larger $k$, $\theta^* < (a-b)^2/k(a+(k-1)b)$ (see Subfigures
	(b), (c) and Proposition \ref{prop:k_threshold}). We prove analogously that
	recovery is not possible below $\theta^*$ in the labeled model as $p\to0$
	for all $k$ (Theorem \ref{thm:k_clusters}). In the two-cluster case, recent
	results of \cite{MNS:2013b} and \cite{Massoulie:2013} show that recovery is
	possible in the range $(\theta^*, \theta_{CO})$; above $\theta_{CO}$, a
	combination of the results of \cite{Coja-Oghlan:2010} and \cite{MNS:2013a}
	give optimal recovery in the standard model for $k =2$; we observe that in
	the labeled model for $k=2$, one can reconstruct better than randomly in
	the range ($\theta^*$, $\theta_{CO}$) and optimally above $\theta_{CO}$
	using {\em local} algorithms (see
	Propositions~\ref{prop:local_count}~and~\ref{prop:local_with_info}).  The
	results of \cite{Coja-Oghlan:2010} also give non-trivial recovery
	guarantees above $\theta_{CO}$ for all $k$.  In the $k$-cluster case
	(Figures (b), (c)), the picture is more complicated: $\phi_1$ and $\phi_2$
	are conjectured brute-force and efficient solvability thresholds
	respectively, both conjectured by \cite{DKMZ:2011}---above $\phi_1$
	recovery is possible via brute-force enumeration, and above $\phi_2$ an
	efficient algorithm for recovery exists. Above $\phi_2$,
	Proposition~\ref{prop:local_count} shows that recovery is possible for $k$
	clusters via a {\em local} algorithm.  In Subfigure (c), for any $b,p$, if
	$k > k^*(p)$ and $(a-b)/k > 1$, as in Theorem \ref{thm:reconstruction},  we
	give an efficient {\em local} recovery algorithm that correctly labels
	$\frac{1}{k} + \epsilon$ of the nodes, even below the conjectured efficient
	recovery threshold $\phi_2$. \label{fig:two}}
\end{figure}

\begin{proposition}[Informal version] \label{prop:permutation_informal}
Given a clustering algorithm which outputs clusters correlated with the true
clustering, a small fraction of revealed node labels is sufficient to output a
labeling which is correlated with the true labeling.
\end{proposition}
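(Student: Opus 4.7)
The plan is to use the given clustering algorithm to produce a partition, then use the revealed node labels to recover the correct label-to-cluster correspondence via majority vote within each output cluster.

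First I would pin down what ``correlated with the true clustering'' means: there exist a permutation $\pi^{*}$ of $\{1,\ldots,k\}$ and a bias $\delta = \delta(n) > 0$ (with $\delta$ bounded below by some function of the correlation guarantee) such that, writing $\hat{C}_{1},\ldots,\hat{C}_{k}$ for the output clusters and $C_{1},\ldots,C_{k}$ for the true ones, we have $|\hat{C}_{i} \cap C_{\pi^{*}(i)}| \geq (1/k + \delta)\,|\hat{C}_{i}|$ for every $i$. This is the form in which a correlation guarantee can be used downstream. Given such a partition, the task reduces to identifying $\pi^{*}$.

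For the recovery step, I would use the $p$-fraction of revealed labels in a direct way: for each output cluster $\hat{C}_{i}$, let $R_{i}$ be the set of revealed nodes inside $\hat{C}_{i}$, and output $\hat{\pi}(i) = \arg\max_{j \in \{1,\ldots,k\}} |\{v \in R_{i} : \sigma_{v} = j\}|$, i.e.\ plurality vote. Since the revelations are an independent random subset, conditional on $\hat{C}_{i}$ the set $R_{i}$ is an i.i.d.\ sample (approximately $p\,|\hat{C}_{i}|$ nodes) from $\hat{C}_{i}$, so the empirical label distribution inside $R_{i}$ concentrates around the true label distribution inside $\hat{C}_{i}$. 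By a Chernoff/Hoeffding bound plus a union bound over the $k$ output clusters and $k-1$ candidate labels, the plurality correctly returns $\pi^{*}(i)$ with high probability as long as $p\,|\hat{C}_{i}|\,\delta^{2} = \omega(\log k)$; since $|\hat{C}_{i}| = \Theta(n/k)$, this is satisfied even when $p = p(n)$ vanishes, provided $p n \delta^{2}/k$ diverges. Relabeling $\hat{C}_{i}$ by $\hat{\pi}(i)$ then produces a labeling whose agreement with the true labeling is, up to $o(1)$ error coming from the permutation recovery, the same as the clustering's correlation with the true partition.

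The main (and only nontrivial) obstacle is the interaction between the correlation strength $\delta$ and the fraction $p$ of revealed labels: one must insist that $p$ be large enough, as a function of $\delta$ and $k$, for the concentration inequality to beat the bias, and one must be careful if the clustering guarantee is only ``correlated in expectation'' rather than in a per-cluster sense (in the latter case an averaging argument recovers a per-cluster bias on a $(1-o(1))$-fraction of clusters, which is enough to finish). Everything else---running the clustering algorithm, taking the plurality, applying the resulting permutation---is mechanical.
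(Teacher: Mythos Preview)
Your approach is essentially the paper's: run the clustering, take the plurality of revealed labels inside each output cluster, and use Chernoff--Hoeffding to control the error. Two refinements the paper makes are worth noting.

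First, the assumption $|\hat{C}_{i}| = \Theta(n/k)$ is not part of the hypothesis; the clustering algorithm is only promised global accuracy $\tfrac{1}{k}+\epsilon$, so some output clusters may be tiny. The paper handles this by simply discarding clusters with $|\hat{C}_{i}| \le \epsilon n/(4k)$ and labeling their members arbitrarily, which costs at most $\epsilon/4$ in total.

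Second, and more substantively, your plan is to recover the permutation $\pi^{*}$ exactly, which requires a per-cluster bias, and your fallback ``averaging argument'' claiming per-cluster bias on a $(1-o(1))$-fraction of clusters does not actually follow from a global guarantee (e.g.\ one cluster could carry all the bias). The paper sidesteps this entirely: it never claims $g(i)=\pi^{*}(i)$, and $g$ is not even required to be a permutation. Instead, Chernoff shows that within each large cluster the plurality label $g(i)$ has frequency within $\epsilon/4$ of the \emph{maximum} label frequency in that cluster, and then one uses $\max_{j}\tfrac{1}{|C_i|}\sum_{v\in C_i}\one(\sigma_v=j) \ge \tfrac{1}{|C_i|}\sum_{v\in C_i}\one(\sigma_v=\pi^{*}(i))$ to compare $g$ directly against the optimal permutation. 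This gives accuracy $\ge \tfrac{1}{k}+\epsilon/2$ without any per-cluster assumption.
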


In the absence of any node information, it is an easy folklore result that any
local algorithm cannot recover clusters. However, we show that in the case of
two clusters, when a small fraction of node labels are revealed, a local
algorithm is able to recover the clusters optimally.  This latter result is a
direct corollary of a robust reconstruction result on trees of~\cite{MNS:2013a}.\medskip
\begin{proposition}[Informal version] \label{prop:local_sucks_informal} In the
unlabeled stochastic block model, no local algorithm can find a clustering
correlated with the true clustering.
\end{proposition}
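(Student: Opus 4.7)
The plan is to leverage the $S_k$-symmetry of the cluster labels in the unlabeled SBM together with the locality of the algorithm and the sparsity of $G$.

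First, I would formalize a local algorithm as outputting $\hat\sigma_v = A(B_r(v), R_v)$, where $B_r(v)$ is the $r$-ball around $v$ in $G$, viewed as a rooted \emph{unlabeled} graph, $r = o(\log n)$, and $(R_v)_v$ are i.i.d.\ auxiliary random variables independent of $(\sigma, G)$.

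Next I would exploit symmetry. Write $\pi \cdot \sigma$ for the pointwise action of a permutation $\pi \in S_k$ on the labels. The joint law $(\sigma, G)$ is invariant under this action, i.e.\ $(\pi \cdot \sigma, G)$ has the same distribution as $(\sigma, G)$, since (i) $\sigma$ is uniform over balanced $k$-partitions, and (ii) conditional on $\sigma$, the law of $G$ depends only on the partition induced by $\sigma$ and not on the label names. Consequently the posterior of $\sigma_v$ given $G$ is uniform on $\{1,\ldots,k\}$, and since $\hat\sigma_v$ is a randomized function of $G$ alone (the $R_v$ being independent of $\sigma$), $\hat\sigma_v$ is independent of $\sigma_v$. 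Hence for every fixed permutation $\pi$, $\Pr[\hat\sigma_v = \pi(\sigma_v)] = 1/k$ and $\E \sum_v \one\{\hat\sigma_v = \pi(\sigma_v)\} = n/k$.

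To upgrade this mean statement to a high-probability bound on the maximum overlap over permutations, I would use concentration. Since $r = o(\log n)$ and the average degree is $O(1)$, for any two fixed vertices $u, v$ the balls $B_r(u)$ and $B_r(v)$ are disjoint with probability $1 - o(1)$, and the pair $(B_r(u), B_r(v))$ couples in total variation to a pair of independent two-type Galton-Watson branching processes. This makes $\hat\sigma_u$ and $\hat\sigma_v$ asymptotically independent. A Chebyshev argument then gives $\sum_v \one\{\hat\sigma_v = \pi(\sigma_v)\} = n/k + o(n)$ with high probability for each fixed $\pi$, and a union bound over the $k!$ permutations rules out non-trivial overlap.

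The main obstacle is the concentration step: the symmetry argument is essentially automatic and supplies the per-vertex uniformity, but turning pairwise asymptotic independence into a quantitative bound on the covariance of the indicators requires a careful coupling of the BFS exploration of each local neighborhood to an independent branching process. This coupling succeeds in total variation as long as the explored neighborhoods have size $n^{o(1)}$, which holds here since the average degree is $O(1)$ and $r = o(\log n)$.
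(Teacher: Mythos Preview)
Your proposal is correct and follows essentially the same route as the paper: use the $S_k$-symmetry of the model to show that for any fixed permutation $\pi$ the expected overlap is exactly $1/k$, then control the variance via a second-moment (Chebyshev) argument exploiting that for $r=o(\log n)$ two randomly chosen $r$-balls are disjoint with probability $1-o(1)$, and finish with a union bound over the $k!$ permutations. The only cosmetic difference is that the paper bounds the covariance directly by conditioning on the event $d(u,v)>2r$ and arguing that $\sigma_v$ is then nearly uniform given everything on the $u$-side, whereas you phrase this as a total-variation coupling to independent multi-type branching processes; just be sure in your write-up that the coupling statement covers the joint law of $(\hat\sigma_u,\sigma_u,\hat\sigma_v,\sigma_v)$ and not merely $(\hat\sigma_u,\hat\sigma_v)$, since it is the indicators $\one\{\hat\sigma_v=\pi(\sigma_v)\}$ whose covariance you need.
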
 \smallskip

\begin{proposition}[Informal version] \label{prop:local_with_info_informal}
In an instance of the labeled stochastic block model, when $k = 2$, if $(a -
b)^2 > C (a + b)$ for some large constant $C$, then there is a local algorithm
which given a vanishing fraction of labeled nodes, reconstruct the label of
all nodes with the same accuracy as the optimal (non-local) algorithm for the
unlabeled problem.
\end{proposition}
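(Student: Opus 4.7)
The plan is, for each vertex $v$, to run a local algorithm that explores the radius-$r$ neighborhood $B_r(v)$ for $r = r(n,p) = o(\log n)$ chosen appropriately, and uses the labels of the revealed vertices inside $B_r(v)$ as noisy leaf observations in the tree-reconstruction problem for a two-type broadcast process rooted at $v$. The accuracy guarantee comes from the robust reconstruction theorem of \cite{MNS:2013a}: when $(a-b)^2 > C(a+b)$ for $C$ sufficiently large, the recursive belief-propagation estimator built from noisy depth-$r$ observations asymptotically attains the same accuracy as the optimal estimator based on clean depth-$r$ leaves, which in turn equals the information-theoretic optimum achieved by the optimal non-local algorithm for the unlabeled SBM.

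Concretely, I would first invoke the standard coupling between $B_r(v)$ in the sparse SBM and a two-type Galton--Watson tree $\T_r$ with $\Pois(a/2)$ same-label and $\Pois(b/2)$ opposite-label offspring distributions, which has total variation distance $o(1)$ for $r = o(\log n)$. Since the revelation of vertex labels is independent of the graph and of the clustering, the set of revealed labels pulls back under the coupling to an independent Bernoulli$(p)$ thinning of the broadcast labels on $\T_r$. I would then choose $r$ to be the smallest integer satisfying $p\,\bar d^{\,r} \to \infty$, where $\bar d = (a+b)/2$; this ensures that many seeds sit near the effective boundary while keeping $r = o(\log n)$, provided the vanishing rate of $p$ is taken slowly enough (permissible since it is a free parameter). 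Running BP on the seeded $\T_r$ and reading off the root marginal at $v$ yields the desired local estimator, whose accuracy converges to the clean-leaf tree-reconstruction optimum by \cite{MNS:2013a}, and hence to the accuracy of the optimal non-local algorithm on the unlabeled problem.

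The main technical obstacle is uniform control as the erasure probability $1-p$ tends to $1$: the robust reconstruction theorem of \cite{MNS:2013a} is stated for a noise channel of fixed strength, whereas here the effective noise weakens only as $r$ grows. The resolution is that we are free to grow $r$ with $1/p$: the census-type signal governing reconstruction scales as $p\,\bar d^{\,r}\bigl(\tfrac{a-b}{a+b}\bigr)^{2r}$, which diverges under the hypothesis, so either a uniform form of the robust reconstruction result applies or one argues directly via a census/recursive-majority estimator above the stronger threshold. A secondary remark is that revealed vertices appear at all depths $\leq r$, not only at the depth-$r$ boundary, but monotonicity of the tree-reconstruction problem in the amount of information provided means that incorporating these interior seeds can only improve the estimator, and the pointwise guarantee for each $v$ passes to a global guarantee by linearity of expectation once the coupling error is absorbed.
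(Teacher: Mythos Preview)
Your outline matches the paper's: couple $B_r(v)$ to a Poisson Galton--Watson broadcast tree, run belief propagation using the revealed labels as noisy observations, and invoke the robust reconstruction theorem of \cite{MNS:2013a}. Where you diverge from the paper is in the handling of the noise level and the choice of $r$, and this is where your argument has an unnecessary gap.

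You choose $r$ so that $p\,\bar d^{\,r}\to\infty$, then worry (correctly) that the per-leaf erasure probability $1-p$ still tends to $1$, and propose to rescue this via a census-signal calculation. But that calculation is delicate: with your choice of $r$ the quantity $p\,(\bar d\lambda^2)^r$ need not diverge, since $\log(\bar d\lambda^2)/\log \bar d<1$. The paper avoids this entirely by reversing the order of limits. For each \emph{fixed} $p>0$, randomize the unrevealed boundary labels; this is exactly a BSC with flip probability $\delta=(1-p)/2<1/2$, a fixed constant. The robust reconstruction theorem of \cite{MNS:2013a} then says directly that $\widetilde{\TR}^*(d,\eta)=\TR^*(d,\eta)$ whenever $d(1-2\eta)^2\ge C$, and crucially this equality does not depend on $\delta$. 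So the local estimator already attains $\TR^*$ in the $r\to\infty$ limit at every fixed $p>0$, and there is nothing further to prove as $p\to 0$.

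There is a second point you omit: the claim is that the local algorithm achieves \emph{exactly} the unlabeled optimum, not merely at least it. The paper obtains the matching upper bound by choosing $r=-\tfrac{1}{2}\log_d p$, so that $p\,\bar d^{\,r}=p^{1/2}\to 0$ and hence $R\cap T_r=\emptyset$ with high probability; then conditioning on the interior seeds is vacuous and the local posterior is dominated by the clean-leaf posterior, giving $\le \TR^*$. Your choice $p\,\bar d^{\,r}\to\infty$ makes this direction unavailable. In short: take $r$ so that the interior is empty (for the upper bound), and apply \cite{MNS:2013a} at fixed $p$ (for the lower bound); no uniform-in-$\delta$ or census argument is needed.
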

\medskip

We also observe that results on census reconstruction~\cite{MosselPeres:03}
imply that above the Kesten-Stigum bound a vanishingly small fraction of
revealed nodes suffices for the cluster recovery problem.
\medskip

\begin{proposition}[Informal version] \label{thm:k_clusters_ks_informal}
For any fixed $k$,  above the robust reconstruction threshold (i.e. when
$(a-b)^2 > k(a +(k-1)b)$), when the fraction of revealed node labels is
vanishingly small, the cluster recovery problem is solvable by a local
algorithm.
\end{proposition}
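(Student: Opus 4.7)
The plan is to exhibit, at each vertex, a local classifier based on the census of revealed labels in its $R$-neighborhood, and to analyze it by reducing to census reconstruction on the coupled multi-type Galton--Watson tree, as in~\cite{MosselPeres:03}. First, I would couple the $R$-neighborhood of a fixed node $v$ in $G$ to a multi-type Galton--Watson tree $\T$ rooted at $v$ in which each type-$i$ vertex has an independent $\Pois(a/k)$ number of children of type $i$ and $\Pois(b/k)$ children of each type $j\neq i$; this standard coupling has total variation $o(1)$ for any $R = o(\log n)$. So it suffices to reconstruct the root type of $\T$ better than random from knowledge of a uniformly random $p_n$-fraction of the labels among the nodes of $\T$ within depth $R$.

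Next I would invoke census reconstruction. The broadcast on $\T$ has mean offspring $c = (a+(k-1)b)/k$ and second transition-matrix eigenvalue $\lambda = (a-b)/(a+(k-1)b)$, so the hypothesis $(a-b)^2 > k(a+(k-1)b)$ is exactly the Kesten--Stigum condition $c\lambda^2 > 1$. Writing $W_j^R := \sum_{v \text{ at level } R}(\one_{\sigma_v = j} - 1/k)$ for the centered census, a standard second-moment analysis \`a la Mossel--Peres yields $|\E[W_j^R \mid \sigma_\rho]| = \Theta((c\lambda)^R)$ and $\Var(W_j^R) = O((c\lambda)^{2R})$, so the plurality estimator $\arg\max_j W_j^R$ recovers $\sigma_\rho$ with probability $1/k + \Omega(1)$ uniformly in $R$.

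The main obstacle is handling the subsampling of labels: only a $p_n$-fraction of the level-$R$ labels are revealed. Writing $\hat W_j^R := \sum_{v \text{ at level } R} \xi_v(\one_{\sigma_v = j} - 1/k)$ with $\xi_v \sim \mathrm{Bernoulli}(p_n)$ i.i.d.\ reveal indicators, binomial thinning together with the above estimates gives $|\E[\hat W_j^R \mid \sigma_\rho]| = \Theta(p_n (c\lambda)^R)$ and $\Var(\hat W_j^R) = O(p_n c^R + p_n^2 (c\lambda)^{2R})$. The signal-to-noise ratio of the plurality estimator is therefore $\Omega(1)$ whenever $p_n (c\lambda^2)^R \to \infty$. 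Since $c\lambda^2 > 1$, taking $R = C\log(1/p_n)$ for a sufficiently large constant $C$ makes this hold, and this $R$ is $o(\log n)$ as long as $p_n = 1/n^{o(1)}$, which is the intended reading of ``vanishingly small''. The resulting local estimator $\hat\sigma(v) := \arg\max_j \hat W_j^R$ classifies each vertex correctly with probability $1/k + \Omega(1)$; averaging over the $n$ vertices, and using that all but an $n^{o(1)}$-fraction of vertex pairs have disjoint $R$-neighborhoods (so the classifier decisions are pairwise asymptotically independent, and Chebyshev applies), yields the claimed non-trivial cluster recovery with high probability.
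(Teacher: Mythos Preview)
Your argument is correct and is essentially the same as the paper's: both couple the local neighborhood to the Poisson Galton--Watson broadcast tree, apply the census/plurality estimator, and control it via the Mossel--Peres second-moment computation under the Kesten--Stigum condition $c\lambda^2>1$. The only cosmetic differences are that the paper randomizes the unrevealed leaf labels (getting noise parameter $\mu=p$) rather than Bernoulli-thinning them as you do, and that the paper spells out the recursion for the second moment explicitly whereas you cite~\cite{MosselPeres:03}; your added Chebyshev concentration step over vertices and your explicit choice $R=C\log(1/p_n)$ (with the attendant constraint $p_n=n^{-o(1)}$ for the tree coupling) are refinements the paper leaves implicit.
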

The proof follows more or less directly from previous results, but we include it in
Appendix~\ref{sec:recon} for completeness.

\medskip


In this context, one might expect that labels could allow clustering in the
labeled model in regimes which cannot be effectively clustered in the unlabeled
model. The case of two clusters is the case we understand the best. Here,
utilizing results for the reconstruction problem on trees and of
\cite{MNS:2012}, we answer Question 2 in the {\em negative} (Theorem
\ref{thm:k_clusters_informal}) and at the same time answer Question 1
positively (Propositions \ref{prop:local_with_info} and
\ref{prop:local_count}). The complete picture for the case of two clusters is
presented in Figure~\ref{fig:two}(a).

For any fixed $k > 2$, the picture is much more complicated. In this case, we
observe that below the tree reconstruction threshold (this corresponds to
$\theta^*$ in Figure~\ref{fig:two}(b)), a vanishing fraction of node labels do
not assist in the cluster recovery problem (see Theorem
\ref{thm:k_clusters_informal}). \medskip

\setcounter{theorem}{1}
\begin{theorem}[Informal version] \label{thm:k_clusters_informal}
For any fixed $k$, below the  associated tree reconstruction threshold (to be
defined later), when the fraction of revealed node labels is vanishingly small,
the cluster recovery problem is not solvable. In particular, when $k = 2$, the
threshold is the Kesten-Stigum bound of $(a - b)^2 < 2 (a + b)$; for $k \geq 2$,
if $a - b < k$ then recovery is impossible.
\end{theorem}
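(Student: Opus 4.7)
The plan is to reduce the labeled cluster recovery problem to a reconstruction problem on a tree with randomly revealed labels, extending the approach of Mossel, Neeman and Sly. For a uniformly random vertex $v$ in $G$, the $R$-neighborhood couples, with probability $1-o(1)$ for $R = o(\log n)$, with a multi-type Galton-Watson tree $T$ rooted at $v$ whose offspring distribution is $\Poisson((a+(k-1)b)/k)$ and whose labels are broadcast via the stochastic matrix $M$ with $M_{ii} = a/(a+(k-1)b)$ and $M_{ij} = b/(a+(k-1)b)$ for $i \ne j$. Under this coupling, revealing each label of $G$ independently with probability $p_n \to 0$ translates into revealing each label of $T$ independently with probability $p_n$, so it suffices to show that below the tree reconstruction threshold, no function of $T$ together with the revealed labels can produce a posterior for $\sigma_v$ that is bounded away from uniform.

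The core argument splits the revealed labels into $\sigma_S^{\mathrm{in}}$ inside the $R$-ball around the root and $\sigma_S^{\mathrm{out}}$ outside. The expected number of revealed nodes inside is at most $p_n \cdot \E[|B_{\le R}|] \le p_n \cdot ((a+(k-1)b)/k)^{R+1}$, so choosing $R = R(p_n) \to \infty$ slowly enough that this quantity is $o(1)$ guarantees that $\sigma_S^{\mathrm{in}}$ is empty with probability $1-o(1)$. By the Markov property of $T$, $\sigma_S^{\mathrm{out}}$ is conditionally independent of $\sigma_v$ given the boundary labels $\sigma_{B_R}$, and writing $\mu^i_{B_R}$ for the law of $\sigma_{B_R}$ given $\sigma_v = i$, data processing yields
\begin{equation*}
\bigl\| P(\sigma_S^{\mathrm{out}} \mid \sigma_v = i) - P(\sigma_S^{\mathrm{out}} \mid \sigma_v = j) \bigr\|_{TV} \le \bigl\| \mu^i_{B_R} - \mu^j_{B_R} \bigr\|_{TV}.
\end{equation*}
Provided the right-hand side vanishes as $R \to \infty$, combining the two estimates with the diagonal choice of $R(p_n)$ forces the posterior of $\sigma_v$ given everything available to converge to uniform in total variation; a standard symmetry argument then upgrades this to the impossibility of non-trivially correlated clustering on $G$.

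The main obstacle is establishing the tree-level fact $\|\mu^i_{B_R} - \mu^j_{B_R}\|_{TV} \to 0$ below the claimed thresholds. For $k = 2$ and $(a-b)^2 < 2(a+b)$, this is the Kesten-Stigum impossibility for the symmetric two-state broadcast channel on Poisson Galton-Watson trees, which follows from Evans-Kenyon-Peres-Schulman together with the Galton-Watson/SBM analysis of Mossel, Neeman and Sly. For general $k$ with $a - b < k$, the mean-matrix spectral product satisfies $d\lambda = (a-b)/k < 1$, placing the broadcast in the first-moment-subcritical regime, and a direct second-moment or information-contraction argument shows that the law of $\sigma_{B_R}$ conditional on $\sigma_v$ depends on $\sigma_v$ only through terms that decay geometrically in $R$. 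Given this tree-level input, the remaining ingredients---the Galton-Watson coupling, the reduction from clustering to single-label recovery, and the diagonal choice of $R(p_n)$---are standard.
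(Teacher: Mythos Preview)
Your overall architecture matches the paper's: couple the $R$-neighborhood of a random vertex to a Poisson Galton-Watson broadcast tree, choose $R = R(p)\to\infty$ slowly enough that the revealed set inside the ball is empty with probability $1-o(1)$, and then argue that the information outside the ball is dominated by the boundary labels $\sigma_{\partial G_R}$, which by tree non-reconstruction carry asymptotically no information about $\sigma_v$. The tree-level inputs you cite (Evans--Kenyon--Peres--Schulman for $k=2$, and $d\lambda = (a-b)/k<1$ for general $k$) are the same ones the paper uses.

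The gap is in how you handle $\sigma_S^{\mathrm{out}}$. You write that ``by the Markov property of $T$, $\sigma_S^{\mathrm{out}}$ is conditionally independent of $\sigma_v$ given $\sigma_{B_R}$,'' and earlier that the coupling ``translates'' the revealed labels of $G$ into revealed labels of $T$. But the coupling with $T$ only holds on the ball of radius $R$; the revealed labels \emph{outside} $G_R(v)$ live in the graph, not in the tree, and the posterior $\Pr[\sigma_v\,|\,G,R,\sigma_R]$ conditions on the entire edge/non-edge pattern of $G$. That posterior is \emph{not} a Markov random field on $G$ (non-edges contribute), so you cannot simply invoke a Markov property to decouple inside from outside. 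In the paper this is exactly the role of the separation lemma of Mossel--Neeman--Sly (Lemma~\ref{lem:MNSsep} here): it is a block-model-specific result that $\Pr[\sigma_A\,|\,\sigma_B,\sigma_C,G]=(1+o(1))\Pr[\sigma_A\,|\,\sigma_B,G]$ for $A$ the ball, $B$ its boundary, and $C$ the outside. The paper then finishes with an entropy-monotonicity argument: the tree non-reconstruction plus emptiness of the inside revealed set force $H(\sigma_v\,|\,G,R,\sigma_{R_1},\sigma_B)$ to be maximal; the separation lemma promotes this to $H(\sigma_v\,|\,G,R,\sigma_R,\sigma_B,\sigma_C)$ maximal; and dropping conditioning only increases entropy, so $H(\sigma_v\,|\,G,R,\sigma_R)$ is maximal as well.

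Your data-processing inequality is a fine substitute for the entropy step, but it does not replace the separation lemma: you still need a statement about the \emph{graph} saying that $\sigma_v$ is (approximately) conditionally independent of everything outside $G_R(v)$ given $\sigma_{\partial G_R}$ and $G$. Without it, the reduction ``it suffices to show the tree statement'' is not justified, and this is not one of the ``standard'' ingredients you list at the end.
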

\medskip


\setcounter{theorem}{0}
Our main interest is in the case when the number of clusters is very large.
Here, we consider the setting when the fraction of revealed nodes $p \to 0$,
and simultaneously the number of clusters $k = k(p) \rightarrow \infty$. In
this setting, we show that revealing node labels has a dramatic effect on the
threshold for cluster recovery. We show that a local algorithm successfully
solves the cluster recovery problem even below the conjectured algorithmic
threshold in the unlabeled case, $(a - b)^2 = k(a + (k-1) b)$. As the number of
clusters $k \to \infty$, our algorithm works all the way down to the tree
reconstruction threshold of $(a - b)/k > 1$. Moreover, it is impossible to
recover (locally or globally) with a vanishing fraction of labeled nodes if
$(a-b)/k < 1$. Both results follow from the corresponding results on trees.

\medskip
\setcounter{theorem}{0}
\begin{theorem}[Informal version]
\label{thm:reconstruction_informal}
For every $\delta >0$, there exists $\epsilon = \epsilon(\delta) > 0$ such that
for every $p > 0$, if $k = k(p)$ is large enough as a function of $p$ and $a -
b > (1 + \delta)k$, then the label of a random node can be recovered with
probability at least $\tfrac{1}{k} + \epsilon$.
\end{theorem}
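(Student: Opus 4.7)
The plan is to carry out the local-to-tree coupling and reduce to a seeded reconstruction problem on a Galton-Watson tree, solved by a threshold label-counting estimator. Via the standard Mossel-Neeman-Sly coupling, the ball of radius $R$ around a uniformly random vertex $v$ in the labeled block model is close in total variation to a Galton-Watson tree $\T$ rooted at $\rho$ in which every vertex has $\Pois(d)$ offspring with $d = (a+(k-1)b)/k$, labels propagate parent-to-child by the symmetric Potts channel with second eigenvalue $\lambda = (a-b)/(a+(k-1)b)$, and each vertex's label is revealed independently with probability $p$. The coupling is valid for $R$ up to a small constant multiple of $\log n/\log d$, and since the $R$ used below depends only on $p,k,\delta$, it suffices to construct a local algorithm on $\T$ achieving accuracy $1/k + \epsilon(\delta)$.

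I would propose the label-count estimator: for each $i \in [k]$ let $V_i = |\{w \in \T : d(w,\rho) = R,\ w\ \text{revealed},\ \sigma_w = i\}|$, fix a threshold $T$, and output $\hat\sigma_\rho = \arg\max_i V_i$ (with random tie-breaking) if $\max_i V_i \ge T$, otherwise output a uniform guess. The multitype mean matrix of the Potts broadcast decomposes with eigenvalue $d$ in the all-ones direction and $\lambda d = 1+\delta$ in the orthogonal $(k-1)$-dimensional subspace, giving
\begin{align*}
\E[V_{\sigma_\rho}] - \E[V_j] = p(\lambda d)^R = p(1+\delta)^R \quad (j \neq \sigma_\rho),
\end{align*}
while $\E[V_j] = pd^R/k - p(1+\delta)^R/k$. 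Calibrate $R$ and $k = k(p)$ so that (a) $pd^R/k = \Theta(1)$, (b) $p(1+\delta)^R \asymp T$ with $T$ a suitably large multiple of $\log k/\log\log k$, and (c) $k/T! = o(1)$; these conditions pin down $R$ up to constants and force $k$ to grow polynomially in $1/p$ with exponent $\log d/\log(1+\delta) - 1 \ge 0$, which is the sense in which $k(p)$ must be ``large enough''.

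Under this calibration, approximating each wrong-label count as $\Pois(\Theta(1))$, the expected number of $j \neq \sigma_\rho$ with $V_j \ge T$ is $\le k/T! = o(1)$ by Stirling, while $V_{\sigma_\rho}$ has mean $\asymp T$ and $\Pr[V_{\sigma_\rho} \ge T]$ is a positive constant depending only on $\delta$. Consequently $\Pr[\hat\sigma_\rho = \sigma_\rho] \ge \epsilon(\delta) > 0$, which for $k$ large is $\ge 1/k + \epsilon(\delta)$; transferring back via the coupling completes the argument. The main obstacle is justifying the Poisson-tail estimate on $V_j$ in the subregime $\lambda d > 1+\delta$ but $\lambda^2 d < 1$ (below Kesten-Stigum), where the labels at depth $R$ are strongly correlated through the broadcast. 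The standard route is to bound the variance by summing pair-covariances of seed indicators over pairs of depth-$R$ vertices partitioned by the depth of their most recent common ancestor: the relevant geometric series $\sum_s (d\lambda^2)^s$ stays $O(1)$ exactly when $d\lambda^2 < 1$, so $\Var[V_j] = O(\E[V_j])$, and one then upgrades to a right-tail bound via a Chen--Stein Poisson approximation or a Chernoff-type argument adapted to the branching dependence.
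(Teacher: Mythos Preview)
Your reduction to the broadcast process on a Poisson Galton--Watson tree is the same as the paper's, but from that point the arguments diverge, and the step you yourself flag as ``the main obstacle'' is a genuine gap rather than a technicality.

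The paper's algorithm is \emph{not} a label-count/plurality rule. After pruning to maximum degree $D$, it fixes a depth $\ell = \ell(d,\delta,p)$ \emph{independent of $k$} and outputs a label $l$ only when there exist two revealed vertices at depth $\ell+1$ carrying label $l$ whose \emph{first common ancestor is the root}. A percolation argument (Lemma~\ref{lem:min2}) shows that with probability depending only on $d\lambda = 1+\delta$, two distinct children of the root each have a revealed depth-$\ell$ descendant inside the root's no-mutation cluster, so the good event has probability $\ge \epsilon_1\epsilon_2^2$ independent of $k$. Since the pruned tree has at most $D^{\ell+1}$ edges, the probability that some wrong label $j$ appears twice via two independent mutations in different root-subtrees is at most $k\eta^2 D^{2\ell+2} = O(D^{2\ell+2}/k)$ (Lemma~\ref{lem:only1}), which is made small simply by taking $k^*$ large relative to the fixed quantity $D^{2\ell+2}$.

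In your scheme, the Poisson tail $\Pr[V_j \ge T] \lesssim 1/T!$ is false below Kesten--Stigum, and no moment bound rescues the union bound. A single mutation to color $j$ on an edge incident to the root---an event of probability $\Theta(\eta) = \Theta(1/k)$---plants a subtree whose revealed $j$-count at depth $R$ is, by the very percolation argument you invoke for $V_{\sigma_\rho}$, of order $p(1+\delta)^{R-1} \asymp T$ with constant probability. Hence $\Pr[V_j \ge cT] = \Omega(1/k)$, and the union bound over $k-1$ wrong colors is $\Omega(1)$, not $o(1)$. Your variance bound $\Var[V_j] = O(\E[V_j])$ is correct when $d\lambda^2 < 1$, but under your calibration $pd^R/k = \Theta(1)$ Chebyshev gives only $\Pr[V_j \ge T] = O(1/T^2)$, and then
\[
\frac{k}{T^2} \;=\; \frac{pd^R}{(p(1+\delta)^R)^2} \;=\; \frac{1}{p\,(d\lambda^2)^R} \;\longrightarrow\; \infty
\]
precisely because $d\lambda^2 < 1$. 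Chen--Stein does not help either: every pair of depth-$R$ leaves is dependent, so the $b_2$ term is of the order of the variance and the total-variation bound is vacuous. The $\Omega(1/k)$ heavy tail is structural, not an artifact of a loose inequality. The paper's ``first common ancestor is the root'' requirement is exactly what separates the two-independent-mutation coincidence (rare for large $k$, in a tree of bounded size) from the single-mutation clustering that inflates your count $V_j$.
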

Note that $\epsilon$ depends on $\delta$ but is independent of $p$.

Recent work in statistical physics~\cite{Moore:2013-misc} argues that for every
\emph{fixed number of clusters} $k$, a vanishing fraction of labels does not
provide any advantage in the detection probability over having no labels at
all. We note that in our results, the order of limits is exchanged as the number
of clusters $k$ needed for our results to hold, depends on the fraction of
nodes revealed. Thus, there is no contradiction between the results (see
also~\cite{AvSG:2010,vSMGE:2013}). Figure~\ref{fig:two}(c) provides a detailed
picture of the case in which the number of clusters is very large (in the setting of Theorem~\ref{thm:reconstruction_informal}).

\subsection*{Open Problems}

In the case of two clusters, we conjecture that whenever any fraction of node
labels are revealed, there is a \emph{local algorithm} that recovers the
clusters optimally. This would follow from a related conjecture regarding
information flow on trees stated below. We report some simulations suggesting
the veracity of the conjecture in Appendix~\ref{sec:expts}. \medskip

\begin{conjecture}[Informal version] \label{conj:informal}
Let $T$ be an infinite tree with root
$\rho$. The tree is labeled from the set $\{\pm 1\}$ as follows. First, the root
is assigned a label from $\{\pm 1\}$ at random. Along each edge
the label is propagated with probability $1 - \eta$ and flipped with
probability $\eta$. Let $(T, \tau)$ denote the resulting labeled tree. Add
each node independently to a set $R$ with probability $p$. Finally for any $r$,
let $\partial T_r$ denote the set of leaves at depth $r$. Then, for any value
of $p > 0$ and $\eta
< 1/2$,
\[
\lim_{r \to \infty} \E \big|\Pr[ \tau_{\rho} = 1 ~|~ \tau_{R}] -
\Pr[\tau_{\rho} = 1~|~ \tau_{R},\tau_{\partial T_r}]\big| = 0
\]
\end{conjecture}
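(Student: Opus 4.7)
The natural strategy is to reduce the conjecture to a reconstruction-impossibility question on a random sub-tree, and then establish a decay-of-information recursion. Let $C$ denote the connected component of the root $\rho$ in $T \setminus R$, and let $\partial C \subseteq R$ be the set of nodes of $R$ adjacent to $C$. Every ray from $\rho$ toward infinity either stays inside $C$ or exits through some node of $\partial C$ whose label is revealed, so $\partial C$ behaves as a ``cutset with observed boundary values''.

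The first step is a Markov reduction. By the Markov property of the broadcast process, $\Pr[\tau_\rho = 1 \mid \tau_R]$ depends on $\tau_R$ only through the pair $(C, \tau_{\partial C})$, since labels of $R$-nodes strictly beyond $\partial C$ are conditionally independent of $\tau_\rho$ given $\tau_{\partial C}$. Similarly, $\Pr[\tau_\rho = 1 \mid \tau_R, \tau_{\partial T_r}]$ depends on the additional data $\tau_{\partial T_r}$ only through $\tau_{\partial T_r \cap C}$, as labels on $\partial T_r$ outside $C$ are Markov-blocked by some ancestor in $\partial C$. On the event $\{|C| < \infty\}$, for all sufficiently large $r$ we have $\partial T_r \cap C = \emptyset$, so the two posteriors coincide exactly; by dominated convergence, this event contributes nothing to the limit in the conjecture.

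The remaining and hard case is $\{|C| = \infty\}$. Here the claim reduces to showing that on the random subtree $C$, given $\tau_{\partial C}$, the additional labels at depth $r$ inside $C$ convey vanishing information about $\tau_\rho$. The key structural feature is that each node $v \in C$ carries ``side observations'' in the form of the revealed labels of its children in $\partial C$, each being a noisy copy of $\tau_v$ with flip probability $\eta < 1/2$. Intuitively, these per-node noisy observations should dominate the root's posterior and render the deep boundary irrelevant. I would define
\[
f_r \;:=\; \E\bigl|\Pr[\tau_\rho = 1 \mid \tau_R] - \Pr[\tau_\rho = 1 \mid \tau_R, \tau_{\partial T_r}]\bigr|
\]
and attempt to derive a contractive recursion for $f_r$ in the spirit of the non-reconstruction arguments of~\cite{MNS:2012} and the robust reconstruction analysis of~\cite{MNS:2013a}, with the per-level side observations contributing an additional multiplicative damping factor.

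I expect this last step to be the main obstacle. Since the conjecture must hold for \emph{every} $p > 0$ and \emph{every} $\eta < 1/2$, one has to handle the regime in which the percolated subtree $C$ itself sits above the Kesten--Stigum threshold, so that unassisted reconstruction on $C$ would still succeed; there the argument must show that the scattered noisy side observations alone suffice to break reconstructability. A plausible route is an $L^2$/variance estimate tracking the squared discrepancy between the two posteriors through the branching recursion, where each per-node side observation provides a strict contraction. An alternative is a coupling argument: couple two label processes agreeing on $\tau_R$ but imposing different values on $\partial T_r$, then show that disagreement can only propagate through purely hidden $C$-paths and is therefore controlled by the reconstructability of $C$ with observed cutset data, which should decay to zero as $r \to \infty$.
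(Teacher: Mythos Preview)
There is nothing in the paper to compare your proposal against: the statement is explicitly a \emph{conjecture}, not a theorem. The paper does not prove it; it only reports simulations in Appendix~\ref{sec:expts} as supporting evidence and lists it among the open problems. So the right framing is not ``does your proof match the paper's'' but ``does your proposal actually close the gap''.

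On that score, your Markov reduction is correct and standard: conditioning on the revealed cutset $\partial C$ does screen off everything beyond it, and on the event $\{|C|<\infty\}$ the two posteriors eventually coincide, so that part of the limit is zero. This much is routine.

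The entire content of the conjecture, however, lives in the case $\{|C|=\infty\}$, and here your proposal is only a sketch. You correctly identify the obstacle: for small $p$ and $\eta$ the percolated subtree $C$ can itself sit above the Kesten--Stigum threshold, so reconstruction from $\tau_{\partial T_r \cap C}$ alone \emph{does} succeed, and you must show that the scattered side observations $\tau_{\partial C}$ already pin down $\tau_\rho$ well enough to make the boundary redundant. Neither of your two suggested routes is fleshed out enough to do this. The ``contractive recursion for $f_r$'' idea faces the problem that the per-level damping from side observations is of order $p$, while the branching factor of $C$ is of order $d(1-p)$, so a naive $L^2$ bound in the style of~\cite{EKPS:2000} will not contract when $d(1-2\eta)^2(1-p)>1$. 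The coupling idea has the same issue: disagreement paths through $C$ are not rare when $C$ is supercritical for reconstruction. What is really needed is a statement that the posterior given $\tau_R$ already concentrates (in the sense that its conditional variance tends to zero), and that is precisely the open question. Your write-up is an honest outline of where the difficulty lies, but it does not resolve it, and the paper does not either.
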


In addition to Conjecture \ref{conj:informal}, several interesting questions
remain, particularly in the regime where $k$ is large. When $k$ is large, is it
possible to use global and local information together to obtain better
recovery guarantees? Which algorithmic tools might allow one to use global
and local information simultaneously?

Another open problem relates to different types noise models. The assumption in
the current paper is that each label is revealed accurately with a vanishing
probability. But one may consider other types of noise. In particular, we may
assume for example that for each node independently we are given the correct
label with small probability $\delta$ and otherwise a uniformly chosen label. Is
it true that the same results hold for this noise model as for the noise model
considered here? For most of the results presented here, it is easy to see that
the answer is yes. However, for one of our main results,
Theorem~\ref{thm:reconstruction_informal}, the proof {\em does not} extend to
the latter noise model. It is an interesting open problem to determine the
effect of the noisy information in this setup.

\medskip
\begin{remark}
A short abstract describing these results will appear in proceedings of RANDOM 2014.
\end{remark}

\subsubsection*{Acknowledgments}

E.M. thanks Cris Moore, Joe Neeman, Allan Sly and Lenka Zdeborov\'{a} for many
interesting discussions related to the block model. We would like to thank the
authors of ~\cite{Moore:2013-misc} for discussion of their work at its early
stages.  The authors would like to thank the Simons Institute for the Theory of
Computing where much of the work reported here was carried out. The authors
would also like to thank anonymous referees for their helpful comments.

\setcounter{theorem}{0}

\section{Model}
\label{sec:model}
\subsection{Stochastic Block Model}

The stochastic block model is a generative model for modular random networks,
defined by the following set of parameters: the number of clusters $k$, the
expected fraction of nodes in each cluster $i$, $\langle f_i \rangle_{i=1}^k$ ,
and a $k \times k$ symmetric affinity matrix $P_{i, j}$ indicating the edge
probability between nodes of type $i$ and $j$. A random network $G$ on $n$ nodes
is generated as follows:
\begin{enumerate}
\item First, each node $v$ is assigned a label $\sigma_v \in \{1, \ldots, k\}$, s.t.
$\Pr[\sigma_v = i] = f_i$.
\item For every pair of nodes $u, v$, an edge is added between them with
probability $P_{\sigma_u, \sigma_v}$, independently for each pair.
\end{enumerate}
In this work, we are mainly interested in the sparse case, \ie when the average
degree of the graph is constant. We focus on the setting where edge
probabilities only depend on whether the labels of the endpoint are same or
different. Thus, $P_{ii} = a/n$ for $1 \leq i \leq k$ and $P_{ij} = b/n$ for $i
\neq j$, for constants $a > b$.\footnote{This is the so-called assortative
model.} Also, we focus on the case where $f_i = 1/k$ for each $i$, \ie each
cluster is roughly of the same size. The model is denoted by $\G(n, k, a, b)$,
and $(G, \sigma) \sim \G(n, k, a, b)$ denotes an instance of a graph generated
according to the model, where $\sigma$ are the cluster labels of the nodes.
\smallskip

\noindent{\bf Labeled Block Model}: The labeled block model has an additional
parameter $p$, which is the probability with which the true cluster label of any
given node is revealed. Thus, if $(G, \sigma) \sim \G(n, k, a, b)$ is an
instance of the block model, $R \subseteq [n]$ is chosen by placing each node of
$G$ in $R$ independently with probability $p$. We denote this by $(G, \sigma, R)
\sim \G(n, k, a, b, p)$. The clustering algorithm has access to the edges of
$G$ and the cluster labels $\sigma_R$ of nodes in $R$, \ie $(G, R, \sigma_{R})$. 

We also introduce the following notation for convenience. For any two nodes
$u,v \in G$, let $d(u,v)$ denote the distance between $u$ and $v$. We let
$G_r(v) = \{u \in G ~|~ d(u,v) \le r\}$ denote the neighborhood of radius $r$
around $v$; at times we will use $G_r$ when $v$ is clear from context. Let
$\partial G_r(v) = \{u \in G ~|~d(u,v) = r\}$ denote the boundary of $G_r(v)$. \smallskip

\noindent{\bf Cluster Recovery}: The {\em cluster recovery} problem is the
problem of recovering the cluster label of nodes in the stochastic block model
or labeled stochastic block model with better-than-random probability. Note
that correct recovery of all nodes is not the aim, nor is it possible due to
the sparsity of the graph. This problem has also been called the {\em cluster
detection} problem and the {\em cluster reconstruction} problem; for
consistency we will use the term recovery throughout the paper when referring
to graphs, and use reconstruction when referring to broadcast processes on trees.

\subsection{Information Flow on Trees}
\label{sec:model-trees}

We use some results regarding information flow on trees. For a detailed survey
on this topic, the reader is referred to~\cite{Mossel:2004}.

Let $T$ be an infinite rooted tree, with the root note denoted by $\rho$. A
Galton-Watson tree is obtained by starting with a root node, $\rho$, and
recursively adding offspring drawn from some distribution $D$ with mean $d$. In
particular, we will often be interested in the case when $D$ is $\Poisson(d)$.
For any node $v \in T$, let $d(v, \rho)$ denote the distance of $v$ from the
root.  Throughout the paper, we denote $T_{r} = \{v \in T ~|~ d(v, \rho) \leq r
\}$ as the subtree of $T$ up to depth $r$, and $\partial T_{r} = \{ v \in T ~|~
d(v, \rho) = r \}$ as the boundary at depth $r$. \smallskip 

\noindent{\bf Broadcast Process}: Let $T$ be an infinite rooted tree with root
$\rho$. Each node in the tree is assigned a label from some finite alphabet
$\Sigma = \{1 , \ldots, k\}$. The root is labeled by choosing a label
$\tau_{\rho} \in \Sigma$ uniformly at random. For any edge $(u, v)$, with $d(u,
\rho) < d(v, \rho)$, $\tau_v$ is conditionally independent given $\tau_u$, and
is chosen as follows: $\tau_v = \tau_u$ with probability $1 - (k -1) \eta$, and
$\tau_v \in \Sigma \setminus \{\tau_u\}$ randomly otherwise,
where $\eta < 1/k$ is the broadcast parameter.  We denote this process by
$\T(T, k, \eta)$ and an instance generated according to this process by $(T,
\tau) \sim \T(T, k, \eta)$. As in the block model, we can consider the process
when the label of each node is revealed with probability $p$, \ie $R \subseteq
T$ is obtained by adding each $v \in T$ to $R$ independently with probability
$p$. We denote this process by $(T, \tau, R) \sim \T(T, k, \eta,p)$. The
{\em reconstruction problem} is to identify the label of the root, $\rho$ given the
labeled nodes up to some depth $r$. Thus,  the algorithm has access to
$(T_{r}, R_{r}, \tau_{R_{r}})$, where $R_{r}$ denotes $T_{r}
\cap R$.  \smallskip

\noindent{\bf Percolation Process}: Let $T$ be an infinite rooted tree with root
$\rho$. For percolation parameter $\lambda$, each edge $e \in T$ is deleted
independently with probability $\lambda$. Let $C(\rho)$ denote the component of
$T$ containing the root after percolation.

\section{Recovery in the many clusters regime}
\label{sec:many_clusters}
We show that when the number of clusters is very large, even a very small
fraction of revealed node labels allow for cluster recovery, and even in some
regimes below the conjectured algorithmic threshold in the standard model.
More formally, if $p$ is the probability that the label of a node is revealed,
and if the number of clusters is at least $k^* = k(p)$, then even as $p
\rightarrow 0$, the algorithm performs better than random assignment.  The
algorithm (Algorithm~\ref{alg:many_clusters}) is simple and \emph{local}---it
considers a neighborhood around each node and uses the revealed node
information in the neighborhood to make its prediction.
\begin{figure}[t]
\begin{center}
\fbox{
\begin{minipage}{0.85 \textwidth}
\begin{algorithm}
\mbox{ }

\noindent{\bf Input:} $(G,R) \sim \G(n, k, a, b, p)$, radius $r$, max-degree $D$,
revealed cluster labels $\sigma_R$ \medskip

For each node $v \not\in R$
\begin{enumerate}
\item Let $G_r(v)$ denote the (tree-like) neighborhood of $v$ up to distance $r$
\item From $G_r(v)$ delete every subtree rooted at a node with degree larger than $D$
\item Let $L$ denote the set of labels $l \in \Sigma$ for which there
exist $x,y \in R$ such that $\sigma_x = \sigma_y = l$, $d(x,v) = d(y,v) = r$, and
$v$ is $x$ and $y$'s first common ancestor
\item Assign a random label from $L$ to node $v$
\end{enumerate}
\label{alg:many_clusters}
\end{algorithm}
\end{minipage}
}
\end{center}
\end{figure}

\medskip
\begin{theorem}\label{thm:reconstruction}
Let $b > 1$ be fixed, let $a = b + (1 + \delta)k$ for some $\delta > 0$, let $p
> 0$ be fixed. Then, there exists an $\epsilon = \epsilon(b, \delta)$ and $k^* =
k^*(b, \delta, p)$, such that for every $k \geq k^*$, if $(G, R, \sigma_{R})
\sim \G(n, k, a, b, p)$, Algorithm~\ref{alg:many_clusters} labels any random node
of $G$ correctly with probability at least $\epsilon$. In particular,
there exists settings where $(a - b)^2 < k (a + (k-1)b)$ and recovery
is still possible.
\end{theorem}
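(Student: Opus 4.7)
The plan is to reduce the graph problem to its tree analogue and then analyze the set $L$ constructed by the algorithm. For constant $r$ and $n\to\infty$, the $r$-neighborhood $G_r(v)$ of a random vertex is locally tree-like and couples (up to $o_n(1)$ total-variation error) to a Galton–Watson tree with Poisson$(d)$ offspring, where $d=(a+(k-1)b)/k=1+\delta+b$ is a constant \emph{independent of $k$}; the labels evolve according to the $k$-state Potts broadcast with flip parameter $\eta=b/(a+(k-1)b)$ and correlation $\theta=1-k\eta=(1+\delta)/(1+\delta+b)$. The crucial identity is $d\theta=1+\delta>1$: this is the effective same-label branching factor. The truncation step of Algorithm~\ref{alg:many_clusters} is there to excise atypical high-degree vertices; choosing $D=D(d)$ large enough, a Poisson-tail estimate shows that in any constant-radius neighborhood truncation deletes no vertex with probability arbitrarily close to $1$, so it is enough to analyze the untruncated Poisson tree model $\T(T,k,\eta,p)$.

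Condition on the root label being $\ell^\star$ and let $L$ be the output set in the tree model. Two estimates drive the argument: a lower bound $\Pr[\ell^\star\in L]\ge c_1=c_1(b,\delta,p)>0$ uniform in $k$, and an upper bound $\E|L\setminus\{\ell^\star\}|=O_{b,\delta,p}(1/k)$. For the lower bound, consider the \emph{monochromatic} subtree at a child $u$ with $\sigma_u=\ell^\star$: the descendants reachable via edges both of whose endpoints carry label $\ell^\star$. This is itself a Galton–Watson tree with offspring Poisson$(a/k)$. Since $a/k\to 1+\delta>1$, it is supercritical and survives with probability $\xi>0$ bounded away from $0$ uniformly in $k$. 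By Kesten–Stigum-type growth, conditioned on survival the monochromatic population at depth $r-1$ is at least $c(1+\delta)^{r-1}$ with probability $\ge 1-\gamma$, so for $r=r(b,\delta,p)$ chosen large enough at least one of these descendants lies in $R$ with probability $\ge 1-\gamma$. The root has $\ge 2$ children with label $\ell^\star$ with constant probability $c_0=c_0(\delta)>0$ (a Poisson$(\approx 1+\delta)$ count), and these independent sibling events combine to give $\Pr[\ell^\star\in L]\ge c_0(\xi(1-\gamma))^2=:c_1$.

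For the upper bound, fix $\ell\ne\ell^\star$ and let $M_\ell$ count root-children whose subtree contains a revealed $\ell$-descendant at depth $r-1$. By Poisson thinning, $M_\ell$ is Poisson with mean
\[
\mu_\ell\;\le\;(b/k)\cdot 1\;+\;\bigl((a+(k-2)b)/k\bigr)\cdot(pd^{r-1}/k)\;\le\; b/k+pd^r/k\;=\;O_{b,\delta,p}(1/k),
\]
where the first term counts sibling children that happen to carry label $\ell$ (expected $b/k$ of them, with per-child contribution $\le 1$) and the second applies Markov's inequality to the expected count of $\ell$-nodes at depth $r-1$ in a subtree whose root does not bear label $\ell$, which is at most $d^{r-1}/k$. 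Hence $\Pr[\ell\in L]=\Pr[M_\ell\ge 2]\le\mu_\ell^2/2=O(1/k^2)$, and summing over the $k-1$ wrong labels gives $\E|L\setminus\{\ell^\star\}|=O(1/k)\to 0$. A Markov-plus-union argument (truncating $|L|$ at the optimal threshold $M=2\E|L|/c_1$) then yields
\[
\Pr[\text{output}=\ell^\star]\;\ge\;\frac{c_1^2}{4\,\E|L|}\;\ge\;\epsilon(b,\delta)
\]
for all $k\ge k^\star(b,\delta,p)$, which is the required bound.

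The main obstacle is the tension in the choice of $r$ combined with the order of limits: $r$ must be chosen large (depending only on $b,\delta,p$) so that monochromatic information reaches depth $r$ with positive probability, but then the wrong-label bound involves $d^{2r}/k$ and so $k^\star$ must be taken large afterwards to absorb this factor; both dependencies must be made explicit so that $\epsilon$ depends only on $b,\delta$, while $k^\star$ may depend on $p$. A secondary technical point is making the tree–graph coupling quantitative under degree truncation, ensuring that cycles in $G_r(v)$ and atypical-degree vertices contribute only $o_n(1)$ error uniformly; once those errors are absorbed, the Poisson-tree calculation above yields the desired uniform constant $\epsilon=\epsilon(b,\delta)>0$ on the per-vertex success probability for every fixed $p>0$.
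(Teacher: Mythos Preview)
Your proof is correct and follows essentially the same route as the paper: couple $G_r(v)$ to the Poisson$(d)$ Galton--Watson broadcast tree, use supercriticality of the monochromatic (same-label) subtree---$d\theta=(a-b)/k=1+\delta>1$---to show that two revealed $\ell^\star$-witnesses in distinct root-subtrees appear with probability bounded below independently of $k$ (the paper's Lemma~\ref{lem:min2} and Proposition~\ref{prop:tree-proof}), and then show that wrong-label coincidences are $O(1/k)$ so that $k^\star$ can absorb the $d^{O(r)}$ factors. The one technical difference is in the bad-event control: the paper first truncates to maximum degree $D$ and bounds the probability of two independent mutations to the same wrong color by $k\eta^2 D^{2(\ell+1)}=O(D^{2\ell+2}/k)$ (Lemma~\ref{lem:only1}), whereas you skip truncation and bound $\E|L\setminus\{\ell^\star\}|$ directly by Poisson thinning plus Markov---either way the scaling is $O_{r}(1/k)$ and the conclusion is the same.
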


Before we present a formal proof of Theorem~\ref{thm:reconstruction},
we give a high-level idea of the proof. First, we utilize a coupling between
local neighborhoods in $\G(n,k,a,b)$ and a broadcast process on a rooted
Galton-Watson tree with offspring distribution $\Poisson(\tfrac{a +
(k-1)b}{k})$.  Fix $v \in [n]$ and let $(G, \sigma) \sim \G(n, k, a, b)$. For
large values of $n$, and when $r$ is not too large (though increasing as a
function of $n$), $G_r(v)$ looks like a tree. The degree distribution of any
node in $G$ is $\Binomial(n, \tfrac{a + (k-1)b}{kn}) \approx \Poisson(\tfrac{a
+ (k- 1)b}{k})$. If $\eta = \tfrac{b}{a + (k-1)b}$, the distribution $(G_r,
\sigma_{G_r})$ resembles the distribution $(T_r, \tau_{r})$, where $(T, \tau)
\sim \T(T,k,\eta)$ corresponds to the broadcast process on a Galton-Watson tree
process $T$ with offspring distribution  $\Poisson(\tfrac{a + (k-1)b}{k})$.
This coupling was formally proved in~\cite{MNS:2012}. \medskip

\begin{lemma}[\cite{MNS:2012}] \label{lemma:tree-coupling}
\label{lem:treelike}
Let $r < r(n) = \frac{1}{10 \log(2(a + (k-1)b))} \log(n)$. There exists a
coupling between $(G, \sigma)$ and $(T, \tau)$ such that $(G_r, \sigma_{G_r}) =
(T_r, \tau_{T_r})$
a.a.s.
\end{lemma}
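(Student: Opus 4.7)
The plan is to build the local neighborhood $G_r(v)$ by a breadth-first exploration, revealing edges and labels only on a need-to-know basis, and to couple this exploration step by step with the natural sequential construction of a labeled Galton-Watson process $(T_r, \tau_{T_r})$. Concretely, I would initialize the BFS with $L_0 = \{v\}$ (and sample $\sigma_v \sim \mathrm{Uniform}([k])$) and, for each already-exposed vertex $u \in L_t$, reveal the indicators of edges from $u$ to every vertex in the \emph{unexplored} set $U_t := [n] \setminus \bigcup_{s \le t} L_s$, together with the labels of the newly discovered neighbors. Because the edges of $G$ are mutually independent conditional on $\sigma$, and because $|U_t \cap \sigma^{-1}(i)| = n/k \pm o(n)$ with high probability throughout the exploration, the number of new same-label children of $u$ is distributed as $\Binom(|U_t \cap \sigma^{-1}(\sigma_u)|, a/n)$ and the number of new children of each other label $j \neq \sigma_u$ is distributed as $\Binom(|U_t \cap \sigma^{-1}(j)|, b/n)$, all mutually independent.

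On the tree side, I would construct $(T_r, \tau_{T_r})$ vertex-by-vertex in the same BFS order, giving each exposed node $u$ an independent $\Pois(a/k)$ number of same-label children and $\Pois(b/k)$ children of each other label; this produces the correct total offspring distribution $\Pois((a+(k-1)b)/k)$ and the correct broadcast kernel with flip probability $\eta = b/(a+(k-1)b)$. The coupling is now a classical Binomial-to-Poisson comparison applied at every step: the total variation distance between $\Binom(n', a/n)$ and $\Pois(n'a/n)$ is at most $n'(a/n)^2 \le a^2/n$, and similarly for the $b$-parameter binomials, so each BFS step introduces at most $O(k \cdot (a^2+b^2)/n) = O(1/n)$ coupling error per exposed vertex.

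Two bad events could break the identification $(G_r, \sigma_{G_r}) = (T_r, \tau_{T_r})$. The first is a \textbf{cycle} in $G_r$: an edge between two BFS-exposed vertices that are not the BFS parent-child edge. Since the number of exposed vertices is dominated by a Galton-Watson process with mean $(a+(k-1)b)/k$ and we stop at depth $r \le \tfrac{1}{10\log(2(a+(k-1)b))}\log n$, standard GW moment bounds give $|G_r| \le n^{1/5}$ with probability $1 - o(1)$; the expected number of back-edges is then at most $\binom{|G_r|}{2} \cdot (a/n) = O(a \, n^{-3/5}) = o(1)$. The second is a \textbf{depletion} of the label pool or a bad Binomial-to-Poisson step; summing the per-step coupling errors over at most $|G_r| = n^{O(1)}$ exposed vertices gives total error $O(|G_r|/n) = o(1)$, and a standard concentration bound shows $|U_t \cap \sigma^{-1}(i)| = n/k - O(|G_r|)$ deterministically, so the parameters of the binomials agree with those of the Poissons up to a multiplicative $1 + o(1)$ factor.

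The main obstacle is therefore purely a careful bookkeeping one: one has to (i) define the joint coupling between the BFS exploration in $G$ and the sequential GW construction explicitly so that the two processes agree vertex-by-vertex on the event that no cycle appears and each Binomial-Poisson step couples successfully, and (ii) verify that the sum of the failure probabilities, taken over all at most $|G_r|$ exposure steps, is $o(1)$. Union-bounding the cycle event, the label-pool deviation event, and the Poisson coupling failures yields the asymptotic almost sure equality $(G_r, \sigma_{G_r}) = (T_r, \tau_{T_r})$ claimed in the lemma.
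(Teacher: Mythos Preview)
The paper does not prove this lemma; it is quoted verbatim from \cite{MNS:2012} and used as a black box. Your BFS-exploration/Binomial-to-Poisson coupling sketch is exactly the standard argument from that reference (and from the broader sparse-random-graph literature): reveal edges level by level, couple the $\Binom(\approx n/k,\, a/n)$ and $\Binom(\approx n/k,\, b/n)$ offspring counts with $\Pois(a/k)$ and $\Pois(b/k)$, and show that on the event $|G_r|\le n^{1/5}$ the accumulated Poisson-approximation error and the expected number of back-edges are both $o(1)$. So your proposal is correct and matches the approach of the cited source; there is nothing to compare against in the present paper.
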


In~\cite{Mossel:2001} it is shown that for larger alphabet sizes, $d(1 - k
\eta)^2 \geq 1$ is not the threshold for reconstruction for regular trees. As
our results show, this is also the case for Galton-Watson trees.  In order to
understand the intuition behind Algorithm~\ref{alg:many_clusters}, it is useful
to consider an \emph{infinite color} broadcast process on a tree.  Let
$\tilde\eta \ll 1$ be a small broadcast parameter. Suppose the root $\rho$ is
given some color, which is propagated away from the root as follows.  With $(1
- \tilde\eta)$ probability the neighboring node gets the same color, with
$\tilde\eta$ probability the neighboring node gets a completely new color. The
color of each node is revealed with probability $p$. Consider the following
event: there are two nodes in the tree with the same color, for which the root
$\rho$ is the first common ancestor. If such an event occurs, this color
\emph{must} also be the color of the root. We show that this infinite-color
picture is more or less accurate when $k$ is large enough.

\medskip

We now prove Theorem~\ref{thm:reconstruction} through a sequence of lemmas.

Let $T$ be a Galton-Watson tree with offspring distribution $\Pois(d)$ for $d =
\tfrac{a+(k-1)b}{k}$, and let $\eta = \tfrac{b}{a + (k-1)b}$ be the parameter
of the $k$-label broadcast process on $T$ (so that $(T, \tau, R) \sim \T(T,k,
\eta, p)$). Consider the coupling between $(G,\sigma,R)$ and $(T,\tau,R)$ as
per Lemma~\ref{lemma:tree-coupling}.

Next, we relate the broadcast process on $T$ to a percolation process on $T$.
Suppose the root is labeled according to some $\tau_{\rho} \in \Sigma = \{1,
\ldots, k\}$. Then, across any edge the probability that the label remains
unchanged is $1 - (k - 1)\eta$. Thus, if we look at a percolation process with
$\lambda  = 1 - (k - 1)\eta$, then the connected component $C(\rho)$ corresponds
to a tree in which every node has the same label as the root.

\medskip
\begin{lemma}
\label{lem:min2}
Let $T$ be an infinite rooted tree with root $\rho$ and where the degree of each
node is chosen from a distribution with mean $d$.  Let $R \subseteq T$ be
obtained by adding each $v \in T$ to $R$ independently with probability $p$. Let
$\lambda$ be the percolation parameter such that $d \lambda > 1$.  Then in the
percolated tree, for any $B > 0$ there exist $\ell(d\lambda, B, p),
\epsilon(d\lambda)$ such that
\[
\Pr[|C(\rho) \cap \partial T_\ell \cap R| \ge B] \ge \epsilon.
\]
\end{lemma}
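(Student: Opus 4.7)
The plan is to reduce the lemma to the classical survival and growth theorems for supercritical Galton--Watson trees, composed with a simple Chernoff-style estimate for the $R$-sampling. The first step is to observe that bond percolation on $T$ produces a root-component $C(\rho)$ that is itself a Galton--Watson tree: its offspring distribution is just the $\lambda$-thinning of the original offspring distribution, so each node of $C(\rho)$ has expected number of children equal to $d\lambda > 1$. Hence $C(\rho)$ is supercritical, and by the standard Galton--Watson dichotomy it survives forever (i.e.\ $|C(\rho) \cap \partial T_\ell| \geq 1$ for every $\ell$) with some probability $q = q(d\lambda) > 0$.

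Next, I would invoke the classical fact that on the event of survival, $|C(\rho) \cap \partial T_\ell| \to \infty$ almost surely as $\ell \to \infty$. Consequently, for any threshold $M$ one can choose a finite depth $\ell_0 = \ell_0(d\lambda, M)$ such that
\[
\Pr\bigl[\,|C(\rho) \cap \partial T_{\ell_0}| \geq M\,\bigr] \;\geq\; q/2.
\]
Applying this with $M := 4B/p$ and then conditioning on $|C(\rho) \cap \partial T_{\ell_0}| = N \geq M$, the quantity $|C(\rho) \cap \partial T_{\ell_0} \cap R|$ is $\operatorname{Binomial}(N, p)$ with mean at least $4B$, so a standard Chernoff estimate gives
\[
\Pr\bigl[\,|C(\rho) \cap \partial T_{\ell_0} \cap R| \geq B \;\bigm|\; N \geq M \,\bigr] \;\geq\; 1/2
\]
(the small-$B$ case reduces to the trivial claim that at least one marked leaf exists, which follows from survival alone). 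Multiplying these two probabilities, we obtain the lemma with $\epsilon(d\lambda) := q/4$ and $\ell(d\lambda, B, p) := \ell_0(d\lambda, 4B/p)$.

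The only point that deserves a moment of care is the appeal to almost-sure unbounded growth of $|C(\rho) \cap \partial T_\ell|$ on survival, but this holds for every supercritical Galton--Watson process regardless of higher moment assumptions on the offspring distribution, and in particular for the $\lambda$-thinned Poisson tree arising in the application to Theorem~\ref{thm:reconstruction}. I do not anticipate any substantive obstacle beyond verifying this standard branching-process input.
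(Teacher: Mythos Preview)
Your proposal is correct and follows essentially the same route as the paper: identify $C(\rho)$ as a supercritical Galton--Watson process (mean offspring $d\lambda>1$), use a standard branching-process fact to guarantee that $|C(\rho)\cap\partial T_\ell|$ is large with positive probability for suitable $\ell$, and then apply a binomial tail bound for the $p$-thinning by $R$. The only cosmetic difference is that the paper invokes the normalized martingale $W_\ell=(d\lambda)^{-\ell}|Z_\ell|$ and its a.s.\ limit (citing Athreya--Ney) to get $\Pr[|Z_\ell|\ge \epsilon_1(d\lambda)^\ell]>4\epsilon$, whereas you use the qualitative extinction/explosion dichotomy; both are standard and yield the same conclusion here.
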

\begin{proof}
For any $\ell$, let $Z_\ell = C(\rho) \cap T_\ell$, and define $W_\ell =
(d\lambda)^{-\ell}|Z_\ell|$. Observe that $d\lambda > 1$, and
\[
\E[W_{\ell + 1} ~|~ W_\ell] = W_\ell,
\]
and so $W_\ell$ is a positive martingale. Therefore, $W_\ell \rightarrow W$
a.s. Moreover, since this is a branching process, it is known that when $d
\lambda > 1$, $\Pr[W \neq 0] = \lim_{\ell \rightarrow \infty} \Pr[Z_\ell \neq
0] > 0$~\cite{AN:1972}.  Therefore, there exist $\epsilon, \epsilon_1$ such
that
\begin{equation}\label{eq:Zlbound}
	\Pr[|Z_\ell| \ge \epsilon_1(d\lambda)^\ell] > 4\epsilon \text{ for all }
	\ell.
\end{equation}

Now, it remains to bound $|Z_\ell \cap R|$. Since each node in $T$ is in $R$
independently with probability $p$,
$|Z_\ell \cap R| \sim \Binom(|Z_\ell|, p)$.
We choose the smallest $\ell$ such that
$\epsilon_1(d\lambda)^{\ell} = m$ and $\Pr[\Binom(m,p) > B] > \frac{1}{4}$, so that
\begin{align*}
\Pr[\Binom(|Z_{\ell}|, p) \ge B]
&= \sum_{q = 0}^{\infty} \Pr[\Binom(q, p) \ge B ~|~ |Z_{\ell}| = q]\cdot \Pr[|Z_{\ell}| = q]\\
&\ge \Pr[\Binom(m, p) \ge B]\cdot \Pr[|Z_{\ell}| \ge m]\\
&\ge \epsilon,
\end{align*}
where the first inequality follows from independence and from the fact
that $\Pr[\Binom(q,p) \ge B]$ is increasing in $q$, and the second inequality
is an application of Equation \ref{eq:Zlbound}.

Thus, our conclusion follows using $\epsilon$ and $\ell$. Note that $\epsilon$
only depends on the product $d\lambda$, and $\ell$ depends on $d\lambda$, $p$
and $B$.
\end{proof}

\begin{lemma}
\label{lem:only1}
Let $T$ be an infinite rooted tree with root $\rho$ and maximum degree $D$, and
let $T$ be labeled according to the broadcast process with
$\Sigma = \{1, \ldots, k\}$ and $\eta < 1/k$. Let $A_{u, v}$ be the event that
two nodes $u$ and $v$
have $\rho$ as their first common ancestor.
Then for any $\epsilon, \ell$, there exists $k^*(D, \ell, \epsilon)$ such that for
all $k \geq k^*$,  for event $\event$ defined as
\[ \event : \exists u, v \in \partial T_{\ell + 1} ~s.t.~ A_{u, v}, \tau_{u} =
\tau_{v} \neq \tau_{\rho}, \]
then $\Pr[\event] \leq \epsilon$.
\end{lemma}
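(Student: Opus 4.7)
\textbf{Proof plan for Lemma \ref{lem:only1}.} The approach is a straightforward union bound, combining the Markov property of the broadcast process on a tree with a symmetry argument to bound the pairwise collision probability. First, since $T$ has maximum degree $D$, the root has at most $D$ children and every subsequent node at most $D-1$, so $|\partial T_{\ell+1}| \leq D(D-1)^{\ell} \leq D^{\ell+1}$. In particular, there are at most $D^{2(\ell+1)}$ ordered pairs $(u,v)$ of leaves at depth $\ell+1$, and certainly no more with $\rho$ as their first common ancestor.

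For a single such pair, I will bound $\Pr[\tau_u = \tau_v \neq \tau_\rho]$. The key observation is that if the first common ancestor of $u$ and $v$ is $\rho$, then the paths from $\rho$ to $u$ and from $\rho$ to $v$ are edge-disjoint, so by the Markov property of the broadcast process, $\tau_u$ and $\tau_v$ are conditionally independent given $\tau_\rho$. Next, by the permutation-symmetry of the broadcast transition kernel (which is a convex combination of the identity and the uniform distribution), for each fixed $j \in \Sigma$, the conditional law of $\tau_u$ given $\tau_\rho = j$ is invariant under permutations of $\Sigma \setminus \{j\}$. Hence for every $l \neq j$,
\[
\Pr[\tau_u = l \mid \tau_\rho = j] \;=\; \frac{\Pr[\tau_u \neq j \mid \tau_\rho = j]}{k-1} \;\leq\; \frac{1}{k-1}.
\]
Combining this with conditional independence,
\[
\Pr[\tau_u = \tau_v \neq \tau_\rho \mid \tau_\rho = j] \;=\; \sum_{l \neq j} \Pr[\tau_u = l \mid \tau_\rho = j]^{2} \;\leq\; (k-1) \cdot \frac{1}{(k-1)^{2}} \;=\; \frac{1}{k-1}.
\]

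Finally, I apply a union bound over all (at most $D^{2(\ell+1)}$) pairs $(u,v) \in \partial T_{\ell+1} \times \partial T_{\ell+1}$ with $A_{u,v}$:
\[
\Pr[\event] \;\leq\; D^{2(\ell+1)} \cdot \frac{1}{k-1}.
\]
Choosing $k^*(D, \ell, \eps) = \lceil D^{2(\ell+1)}/\eps \rceil + 1$ makes the right-hand side at most $\eps$ for all $k \geq k^*$, completing the proof.

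I don't expect any serious obstacle: the only subtle point is making sure that $\rho$ being the first common ancestor is exactly what is needed to invoke conditional independence via the Markov property, and that the symmetry used to derive the $1/(k-1)$ bound does not accidentally require $\eta$ to be bounded away from $1/k$ — it does not, since permutation-invariance of the kernel holds for any $\eta \in [0, 1/k]$. Everything else is a routine counting argument over the (deterministically bounded) boundary.
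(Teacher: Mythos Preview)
Your proof is correct and arrives at essentially the same bound, $\Pr[\event] \le D^{2(\ell+1)}/(k-1)$, versus the paper's $k\eta^2 D^{2(\ell+1)} < D^{2(\ell+1)}/k$. The overall architecture is the same---Markov property at $\rho$ plus a union bound---but the bookkeeping differs. The paper argues via \emph{mutations}: for $\event$ to occur, two independent mutations to the same color must happen in different subtrees of $\rho$; it then union-bounds over the (at most $D^{\ell+1}$) edges and over the $k$ target colors, picking up a factor $k\eta^2$ and invoking $\eta < 1/k$ at the end. You instead union-bound directly over pairs of leaves and use the permutation symmetry of the kernel to get $\Pr[\tau_u = l \mid \tau_\rho = j] \le 1/(k-1)$ for $l \neq j$, which sidesteps any explicit dependence on $\eta$. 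Your route is slightly cleaner in that it never needs the value of $\eta$ or the mutation viewpoint; the paper's route makes the mechanism (two independent recolorings) more transparent. Either way the threshold $k^* \approx D^{2(\ell+1)}/\epsilon$ comes out the same.
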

\begin{proof}
Say that a \emph{mutation} occurs if the color changes along any edge. We note
that in order for the event $\event$ to occur, two mutations must occur in the
subtrees corresponding to different children of $\rho$, since $\rho$ must be the
first common ancestor. By the Markov property of the broadcast process, it
follows that the two mutations must be independent. Hence, it suffices to bound
the probability of two independent mutations to the same color.

In $T_{\ell + 1}$, there are at most $D^{\ell + 1}$ edges. For any fixed color,
the probability that there is a mutation to that color along any edge is at
most $\eta D^{\ell + 1}$ by union bound, so the probability that there are two
independent mutations to that specific color is at most $\eta^2 D^{2 \ell +
2}$. Taking a union bound over all the colors, we observe that the probability
of the event is at most $k \eta^2 D^{2 \ell + 2}$. Thus, when $k^* \geq
\tfrac{D^{2 \ell + 2}}{\epsilon}$, for any $k \geq k^*$, the statement of the
Lemma holds.
\end{proof}
\medskip

Before proving Theorem \ref{thm:reconstruction}, we prove the corresponding
version for Galton-Watson trees.\\

\begin{proposition}\label{prop:tree-proof}
Let $T$ be a Galton-Watson tree with offspring distribution $\Poisson(d)$. Let
$p > 0$ be fixed. Then there exists $k^*, \epsilon$, such that for any $k \geq
k^*$, if $\eta \leq (d-1 - \delta)/kd$ for $(T,R,\tau)
\sim \T(T,k,\eta,p)$, then
given $(T_{\ell}, R \cap T_{\ell}, \tau_{R})$, the label of the root can be
reconstructed with probability at least $\epsilon$.
\end{proposition}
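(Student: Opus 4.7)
The scheme I would use on the tree is the natural analogue of Algorithm~\ref{alg:many_clusters}: fix parameters $\ell$ and $D$ (to be chosen below), prune $T_\ell$ to the subtree $\tilde T_\ell$ obtained by deleting every subtree whose root has more than $D$ children, form the set
\[
L = \{\, l \in \Sigma \,:\, \exists\, u,v \in R \cap \partial \tilde T_\ell \text{ with } \tau_u = \tau_v = l \text{ and } \mathrm{LCA}(u,v) = \rho\,\},
\]
and output a uniformly random element of $L$ (arbitrary if $L = \emptyset$). The analysis then reduces to showing (i) $\tau_\rho \in L$ with probability at least a positive constant $\alpha = \alpha(d,\delta)$, and (ii) no other label enters $L$ except with probability $\leq \alpha/2$; together these yield correct reconstruction with probability $\geq \alpha/2 =: \epsilon$.

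For (i), I would couple the broadcast process to edge percolation with retention probability $\lambda = 1 - (k-1)\eta$, so that the component $C(\rho)$ in the percolated tree is exactly the set of nodes sharing $\tau_\rho$. The hypothesis $\eta \leq (d-1-\delta)/(kd)$ drives $d\lambda \to 1+\delta$ as $k \to \infty$, so $d\lambda \geq 1 + \delta/2$ for all $k \geq k_0$. Since the subtrees hanging off $\rho$'s children are i.i.d.\ Galton-Watson $\Pois(d)$ trees, applying Lemma~\ref{lem:min2} to a single such subtree (with $B = 1$) supplies a depth $\ell = \ell(d,\delta,p)$ and a constant $q_0 = q_0(d,\delta) > 0$ for which each child independently contributes a revealed leaf to $C(\rho) \cap \partial T_\ell$ with probability $\geq q_0$. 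Because $\rho$ itself has $\Pois(d)$ offspring, a short Poisson calculation shows that at least two distinct children contribute with probability at least some $\alpha(d,\delta) > 0$; on this event $\tau_\rho$ lies in $L$, prior to truncation.

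For (ii), Lemma~\ref{lem:only1} bounds the probability that any fixed label $l \neq \tau_\rho$ enters $L$ via two independent mutations with $\rho$ as their LCA by at most $k\eta^2 D^{2\ell+2}$. Since $\eta = O(1/k)$, this is $O(D^{2\ell+2}/k) \leq \alpha/4$ once $k$ exceeds a threshold depending on $d,\delta,p,\ell,D$. Truncation itself is harmless: $\E|T_\ell| \leq (\ell+1)d^\ell$ and the $\Pois(d)$ tail decays super-polynomially in $D$, so choosing $D$ large enough (as a function of $\ell$ and $d$) makes $\Pr[\text{truncation removes some node of } T_\ell] \leq \alpha/4$. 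The main delicacy is the order of parameter choices: Lemma~\ref{lem:min2} forces $\ell$ to depend on $p$, $D$ must then be chosen depending on $\ell$, and only then can $k^*$ be taken large enough to kill the $k\eta^2 D^{2\ell+2}$ term. A secondary subtlety---and the reason I apply Lemma~\ref{lem:min2} separately to each child's subtree rather than to the whole tree with $B = 2$---is that the two matching revealed leaves must lie in \emph{different} subtrees of $\rho$'s children for $\rho$ to be their LCA.
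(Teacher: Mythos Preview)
Your proposal is correct and follows essentially the same route as the paper: the same algorithm, the same percolation coupling, the same use of Lemma~\ref{lem:min2} applied separately to two children of $\rho$ (with $B=1$) to force $\mathrm{LCA}=\rho$, and the same appeal to Lemma~\ref{lem:only1} to rule out spurious matches for large $k$. The only cosmetic difference is in how the degree truncation is handled: the paper prunes first---replacing $\Pois(d)$ by the bounded distribution $Y$ with mean $d' \geq d - \delta/2$ so that $d'\lambda \geq 1+\delta/2$---and then applies Lemma~\ref{lem:min2} directly to the pruned tree, whereas you apply Lemma~\ref{lem:min2} to the unpruned tree and afterward bound $\Pr[\text{truncation removes some node of }T_\ell]$ by a Markov/union-bound argument; both orderings work and yield the same parameter dependencies.
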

\begin{proof}
First, we check that $\lambda = 1- k\eta = \tfrac{1 + \delta}{d}$. Thus,
$\lambda d = 1 + \delta > 1$.

In order to apply Lemma~\ref{lem:only1}, it is necessary to bound the degree
of the tree by some $D$. In general, the degree of a Galton Watson tree with
offspring distribution $\Poisson(d)$ is not bounded. Instead, we consider a tree
with a modified, bounded degree distribution, $Y$.
Let $Y_0 \sim \Poisson(d)$, let $Y = Y_0$
if $Y_0 \leq D$, and $Y_0 = 0$ otherwise. Choose $D$ such that
$\sum_{i = D}^{\infty} i\tfrac{e^{-d}d^i}{i!} \leq \delta/2$.
Thus, $d^\prime = \E[Y] \geq d -
\delta/2$. Using the fact that $\lambda < 1$, we know that $d^{\prime} \lambda
\geq 1 + \delta/2$. Thus, given a Galton-Watson tree, we can first prune the
tree by deleting any node
that has degree strictly larger than $D$. Call this resulting tree $T^\prime$.

Consider the following event: The root $\rho$ has two children that are
retained in $T^\prime$ and have label $\tau_\rho$. The probability of this
event is at least
$\epsilon_1$, where $\epsilon_1$ depends only on $d$ and $\delta$. Assume that
this event has occurred and let $v_1$ and $v_2$ be these children. Now, we apply
Lemma~\ref{lem:min2} with $B = 1$ to both $v_1$ and $v_2$ to see that with
probability at least $\epsilon_2$ each of $v_1$, $v_2$ has a revealed
descendant at level $\ell(\epsilon_2)$ with label $\tau_\rho$.
Let $\event_{\text{good}}$ denote the event that there
exist two nodes $w_1$ and $w_2$ in $\partial T^\prime_{\ell + 1}$ with $\rho$ as
their first common ancestor and  $\tau_{w_1} = \tau_{w_2} = \tau_\rho$.
Then, $\Pr[\event_{\text{good}}] \geq \epsilon_1\epsilon_2^2$, since the subtrees rooted
at $v_1, v_2$ are conditionally independent.

Let $\ell$ be as obtained above and let $\epsilon = \epsilon_1 \epsilon_2^2/2$.
Now we appeal to Lemma~\ref{lem:only1}, to obtain a value of $k^*$, such that
for any $k \geq k^*$, $\Pr[\event_{\text{bad}}] \leq \epsilon$, where $\event_{\text{bad}}$ is the event defined in Lemma~\ref{lem:only1}. Thus, the algorithm that
looks for two nodes with the same label and having the root as the first common
ancestor, succeeds in labeling the root correctly with probability at least
$\epsilon$.
\end{proof}

Finally, we can appeal to Proposition~\ref{prop:tree-proof} to complete the
proof of Theorem~\ref{thm:reconstruction}.

\begin{proof}[Proof of Theorem~\ref{thm:reconstruction}]
By Lemma~\ref{lem:treelike}, for $(G,R,\sigma) \sim \G(n,k,a,b,p)$,
if $(T, R, \tau) \sim \T(T,k,\eta,p)$ where
$T$ is a Galton-Watson tree with offspring distribution $\Pois(d)$
where $d = \frac{a + (k-1)b}{k}$ and  $\eta = \frac{b}{a + (k-1)b}$,
then we can couple $G_r(v)$ with $T_r$. Note that $\lambda = 1 - k \eta$ is
equal to $(1 + \delta)/d$, and thus Proposition \ref{prop:tree-proof} implies the
desired result immediately.
\end{proof}

\section{Upper bounds below the threshold}
\label{sec:two_clusters}
In this section, we consider the setting where there are a fixed number of
clusters and the fraction of revealed node labels is vanishingly small. We show
that below a certain threshold that arises from the reconstruction problem on
trees, in the limit as $p \rightarrow 0$, cluster recovery is not
possible. We first note that a threshold exists for the tree problem. \medskip

\begin{proposition}\label{prop:k_threshold}
Let $T$ be a Galton-Watson tree with average degree $d > 1$. Let $(T,\tau) \sim
\T(T, k,\eta)$ be the labels obtained by the broadcast process with parameter
$\eta$. There there exists a predicate, $\pi_{k}(d, \eta)$, monotonically
decreasing in $\eta$ and monotonically increasing in $d$, such that if
$\pi_{k}(d,\eta)$ is false, then for each $i \in [k]$,
\[ \lim_{r \rightarrow \infty} \Pr \left[ \tau_{\rho} = i ~|~
\tau_{\partial T_r}\right] \rightarrow  \frac{1}{k},~~a.a.s. \]
\end{proposition}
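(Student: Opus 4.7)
The plan is to encode the posterior's accuracy into a single scalar quantity that is monotone in all the relevant parameters, and take $\pi_k(d,\eta)$ to be the condition that its limit is nonzero. Concretely, let $\mu_r(i) := \Pr[\tau_\rho = i \mid \tau_{\partial T_r}]$ be the (random) root posterior on $\Sigma$, let $u_k$ be the uniform distribution on $\Sigma$, and define
\[
x_r(k,d,\eta) \;:=\; \E\,\lonenorm{\mu_r - u_k},
\]
with expectation over both the Galton--Watson tree and the broadcast. By the Markov property of the broadcast, $\tau_\rho$ and $\tau_{\partial T_{r+1}}$ are conditionally independent given $\tau_{\partial T_r}$, so $\mu_{r+1} = \E[\mu_r \mid \tau_{\partial T_{r+1}}]$; Jensen's inequality applied to the convex map $\nu \mapsto \lonenorm{\nu - u_k}$ yields $x_{r+1} \le x_r$, and therefore the limit $x_\infty(k,d,\eta) := \lim_{r \to \infty} x_r$ exists. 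I would set $\pi_k(d,\eta)$ to be the predicate ``$x_\infty(k,d,\eta) > 0$''.

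Monotonicity of $\pi_k$ reduces to monotonicity of $x_\infty$ in each parameter, which I would establish by coupling. For $\eta$: the symmetric $k$-state channels $\{K_\eta\}$ form a one-parameter semigroup, with $K_{\eta_1} \circ K_{\eta'} = K_{\eta_2}$ for $\eta' = (\eta_2 - \eta_1)/(1 - k\eta_1)$ whenever $\eta_2 \ge \eta_1$, so the $\eta_2$-broadcast is obtained from the $\eta_1$-broadcast by adding extra independent edgewise noise; the resulting boundary is a noisy function of the $\eta_1$-boundary, and the Jensen argument above gives $x_r(\eta_2) \le x_r(\eta_1)$. For $d$: when two offspring laws are stochastically ordered (as with the canonical Poisson$(d)$ family, which is the case used via Lemma~\ref{lem:treelike}) one can couple $T \subseteq T'$ pointwise, so $\partial T_r \subseteq \partial T_r'$ and $\tau_{\partial T_r}$ is a function of $\tau_{\partial T_r'}$; Jensen once more yields $x_r(T) \le x_r(T')$. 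Passing to the limit in both cases gives the stated monotonicity of $\pi_k$.

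Finally, when $\pi_k(d,\eta)$ is false, $x_\infty = 0$, so $\lonenorm{\mu_r - u_k} \to 0$ in $L^1$ and hence in probability; since each coordinate of $\mu_r$ lies in $[0,1]$, this is exactly the a.a.s.\ statement $\Pr[\tau_\rho = i \mid \tau_{\partial T_r}] \to 1/k$ for every $i$. The main obstacle I anticipate is pinning down ``monotone in $d$'' cleanly: the average degree alone does not fix the offspring distribution, and monotonicity in the raw mean fails in general, so the honest statement is monotonicity under stochastic domination of the offspring law---which does specialize to monotonicity in the scalar parameter for the Poisson family that we actually need for the block-model application.
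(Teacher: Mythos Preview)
Your proposal is correct and matches the paper's approach: the paper does not give a detailed proof, only a one-line sketch stating that the result ``relies on the monotonicity of $\pi_k$ in $\eta$ and $d$, and the existence of points where reconstruction is feasible and also points where it is impossible,'' and your argument is precisely a fleshed-out version of that sketch---defining $\pi_k$ as positivity of the limiting posterior deviation $x_\infty$, and establishing monotonicity via the channel-composition coupling in $\eta$ and the tree-inclusion coupling in the offspring law. Your caveat that ``monotone in $d$'' is only well-posed once one fixes a stochastically ordered family of offspring distributions (e.g.\ $\Poisson(d)$, as used downstream via Lemma~\ref{lem:treelike}) is a valid refinement of the paper's informal statement.
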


For the case of $k = 2$, the exact form of $\pi_2$ is known, $\pi_2(d, \eta) =
\one[d(1-2\eta)^2 > 1]$, which follows from~\cite{EKPS:2000}.
In~\cite{Sly:09a}, the exact threshold is given for $k = 3$, and bounds on the
thresholds are given for $k \ge 5$.  For $k \geq 4$, the exact form $\pi_k$ is
not known, but it holds that if $(1 - k\eta) d < 1$, $\pi_{k}(d, \eta)$ is
false. (This was proved for the case of regular trees in~\cite{Mossel:2001};
the proof for Galton-Watson trees is essentially identical). For all $k$, a
reconstructability threshold in $\eta, d$ provably exists in the limit as
$n\to\infty$; the proof of Proposition \ref{prop:k_threshold} relies on the
monotonicity of $\pi_k$ in $\eta$ and $d$, and the existence of points where
reconstruction is feasible and also points where it is impossible.

The threshold from Proposition \ref{prop:k_threshold} can be translated to an
equivalent threshold $\theta_k(a, b)$ in the stochastic block model.  We show
that even in the labeled stochastic block model (where each node's label is
revealed with probability $p$), if $p$ is small and $\theta_k$ is false then it
is impossible to recover node labels with better accuracy than random guessing.
Specifically, we study the setting where $k$ is fixed, $\theta_k$ is false, and $p\to
0$. We first prove this for the general $k$-cluster case, then give an
alternative proof for the case of two clusters (which results in a more explicit
dependence on $p$). \medskip

\begin{theorem} \label{thm:k_clusters}
Fix $v \in [n]$, and let $(G,R,\sigma) \sim \G(n,k,a,b,p)$, for $a + (k -1 ) b >
k$. Then if the predicate $\theta_k(a, b) = \pi_k(\tfrac{a + (k-1)b}{k},
\tfrac{b}{a + (k-1)b})$ is not satisfied, then for all $i \in \Sigma =
[k]$,
\[
\lim_{p \rightarrow 0} \lim_{n \to\infty} \Pr[\sigma_v = i|G, R, \sigma_R] = \frac{1}{k},~~ a.a.s.
\]
\end{theorem}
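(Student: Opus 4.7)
The plan is to use the tree coupling from Lemma~\ref{lem:treelike} to reduce the statement on $G$ to an analogous statement about the posterior of $\tau_\rho$ in the broadcast process on the Poisson Galton-Watson tree, and then to bound that tree-side posterior using the Markov property together with the below-threshold statement of Proposition~\ref{prop:k_threshold}.

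Concretely, for $r = r(n) \asymp \log n$, Lemma~\ref{lem:treelike} couples $(G_r(v), \sigma_{G_r(v)}, R \cap G_r(v))$ to the truncated labeled broadcast process $(T_r, \tau_{T_r}, R \cap T_r)$ on a Galton-Watson tree with $d = (a+(k-1)b)/k$ and $\eta = b/(a+(k-1)b)$ (supercritical, since the hypothesis $a + (k-1)b > k$ gives $d > 1$). Taking $n \to \infty$, this reduces the claim to proving $\lim_{p \to 0} \Pr[\tau_\rho = i \mid T, R, \tau_R] = 1/k$ almost surely for the full infinite labeled broadcast process $(T, \tau, R) \sim \T(T, k, \eta, p)$.

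For the tree, fix a large cutoff depth $\ell$ (to be chosen as a function of $p$). The key identity is the Markov property of the broadcast process: conditional on $\tau_{\partial T_\ell}$, the root label $\tau_\rho$ is independent of all labels at depth $> \ell$. Splitting $R = R_{\leq \ell} \sqcup R_{>\ell}$, it follows that
\[
\Pr[\tau_\rho = i \mid T, R, \tau_R, \tau_{\partial T_\ell}] \;=\; \Pr[\tau_\rho = i \mid T_\ell, R_{\leq \ell}, \tau_{R_{\leq \ell}}, \tau_{\partial T_\ell}].
\]
By the tower property and Jensen's inequality, $L^2$-convergence of the right-hand side to $1/k$ implies the same for the less informative posterior $\Pr[\tau_\rho = i \mid T, R, \tau_R]$.

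For fixed $\ell$, $|T_\ell|$ has finite expectation, so as $p \to 0$ we have $\Pr[R_{\leq \ell} \neq \emptyset] \to 0$; on the event $\{R_{\leq \ell} = \emptyset\}$ the right-hand side of the display reduces to $\Pr[\tau_\rho = i \mid T_\ell, \tau_{\partial T_\ell}]$, which converges to $1/k$ almost surely as $\ell \to \infty$ by Proposition~\ref{prop:k_threshold}, since $\theta_k(a,b)$ fails. Choosing $\ell = \ell(p) \to \infty$ slowly enough as $p \to 0$ makes both sources of error disappear simultaneously. The main technical obstacle will be carrying the coupling through to convergence of the full posterior (not merely of the local neighborhood) and ensuring $r(n)$ grows fast enough relative to $\ell$ that the Markov splitting remains valid in the graph; however, the core of the argument is the tree Markov decomposition above, so these are essentially bookkeeping issues.
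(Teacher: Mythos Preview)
Your tree-side argument is essentially the paper's: couple the $r$-neighborhood to the broadcast process, choose $\ell=\ell(p)\to\infty$ slowly so that $R\cap T_\ell=\emptyset$ a.a.s., and then invoke Proposition~\ref{prop:k_threshold} together with the Markov property at level $\ell$ and a tower/Jensen step. That part is fine.

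The genuine gap is the sentence ``Taking $n\to\infty$, this reduces the claim to proving $\lim_{p\to 0}\Pr[\tau_\rho=i\mid T,R,\tau_R]=1/k$.'' The coupling of Lemma~\ref{lem:treelike} only identifies $(G_r(v),\sigma_{G_r(v)},R\cap G_r(v))$ with $(T_r,\tau_{T_r},R\cap T_r)$; it says nothing about the posterior $\Pr[\sigma_v=i\mid G,R,\sigma_R]$, which conditions on the \emph{entire} graph and on revealed labels arbitrarily far from $v$. In the block model the graph is not a tree beyond radius $r$, so there is no automatic Markov screen at $\partial G_r(v)$, and dismissing this as ``bookkeeping'' is not justified: one has to prove that the boundary $\sigma_{\partial G_r(v)}$ asymptotically decouples $\sigma_v$ from everything outside. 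This is exactly the content of Lemma~\ref{lem:MNSsep} (from \cite{MNS:2012}), which is a substantive block-model result, not a formality.

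The paper closes the gap as follows: from the tree argument one gets that $\Pr[\sigma_v=i\mid G,R,\sigma_{R_1},\sigma_B]\to 1/k$ (with $B=\partial G_r(v)$ and $R_1=R\cap G_r(v)$), hence $H(\sigma_v\mid G,R,\sigma_{R_1},\sigma_B)$ attains its maximal value. Lemma~\ref{lem:MNSsep} then gives $H(\sigma_v\mid G,R,\sigma_{R_1},\sigma_B)=(1+o(1))\,H(\sigma_v\mid G,R,\sigma_R,\sigma_B,\sigma_C)$, and monotonicity of conditional entropy yields $H(\sigma_v\mid G,R,\sigma_R)\ge H(\sigma_v\mid G,R,\sigma_R,\sigma_B,\sigma_C)$, forcing the desired posterior to be uniform. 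Your proposal needs this ingredient (or an equivalent) to pass from the local tree picture to the full-graph posterior.
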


The above result says that as the amount of revealed node information goes to
zero, recovering a clustering that is correlated with the true clustering is
not possible if $\theta_k$ is false. The proof of Theorem \ref{thm:k_clusters}
requires some results from the literature which we now state.

We again utilize a coupling between local neighborhoods in $\G(n,k,a,b)$ and a
broadcast process on a rooted Galton-Watson tree. As in
Section~\ref{sec:many_clusters}, let $T$ be a Galton-Watson tree with offspring
distribution $\Poisson(\tfrac{a + (k-1)b}{k})$ and broadcast parameter $\eta =
\tfrac{b}{a + (k-1)b}$.  We fix $v \in [n]$ and let $(G, \sigma) \sim \G(n, k,
a, b)$.  The distribution $(G_r(v), \sigma_{G_r(v)})$ resembles the
distribution $(T_r, \tau_{r})$.

We also use a result of \cite{MNS:2012} which states that conditioned on
$\sigma_{\partial G_r}$, information from further nodes is not helpful in
clustering. \medskip

\begin{lemma}[\cite{MNS:2012}] \label{lem:MNSsep}
Fix $v \in [n]$, and let $(G,R, \sigma) \sim \G(n,k,a,b,p)$, with $a + (k-1)b >
k$. For $r \leq \frac{1}{10 \log( 2(a + (k-1)b))}\log n$, let $C = \{u \in
G~|~d(u,v) > r\}$, $B = \partial G_r$, and $A = \{u \in G~|~d(u,v) \le r\}$.
Then
\[
\Pr[\sigma_{A}~|~\sigma_{B}, \sigma_{C}, G]
= (1 + o(1))\Pr[\sigma_{A}~|~\sigma_{B}, G].
\]
\end{lemma}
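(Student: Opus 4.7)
The plan is to verify the identity by a direct Bayes' rule calculation, exploiting the layer structure $A\setminus B$, $B$, $C$. Since $\sigma$ has a uniform prior on $[k]^{[n]}$, as a function of $\sigma_A$ with $\sigma_B,\sigma_C$ held fixed,
\[
\Pr[\sigma_A\mid \sigma_B,\sigma_C,G]\ \propto\ \Pr[G\mid \sigma]\ =\ \prod_{\{u,w\}}q_{uw}(\sigma_u,\sigma_w),
\]
where $q_{uw}=p_{\sigma_u,\sigma_w}/n$ on edges, $q_{uw}=1-p_{\sigma_u,\sigma_w}/n$ on non-edges, and $p_{s,t}\in\{a,b\}$ by matching. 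Only factors incident to $A\setminus B$ vary with $\sigma_A$; I split these into (i) pairs entirely inside $A$ and (ii) pairs with one endpoint in $A\setminus B$ and one in $C$. The key structural observation is that no type-(ii) pair can actually be an edge of $G$, because an edge from $u\in A\setminus B$ to $w\in C$ would give $d(v,w)\le d(v,u)+1\le r$, contradicting $w\in C$; hence every type-(ii) factor is a non-edge factor.

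Let $\Phi^{\mathrm{in}}(\sigma_A)$ denote the type-(i) product and $\Phi^{\mathrm{out}}(\sigma_A,\sigma_C)=\prod_{u\in A\setminus B}\prod_{w\in C}(1-p_{\sigma_u,\sigma_w}/n)$. Expanding $\log(1-x/n)=-x/n+O(1/n^2)$ and collecting by the label of $u$ gives
\[
\log \Phi^{\mathrm{out}}(\sigma_A,\sigma_C)=-\tfrac{1}{n}\sum_{u\in A\setminus B}\bigl[b|C|+(a-b)\,n_{\sigma_u}(C)\bigr]+O(|A||C|/n^{2}),
\]
where $n_s(C):=|\{w\in C:\sigma_w=s\}|$. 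Writing $n_s(C)=|C|/k+\Delta_s$, only the piece $-\tfrac{a-b}{n}\sum_s m_s(\sigma_A)\Delta_s$ depends on $\sigma_A$, through $m_s(\sigma_A):=|\{u\in A\setminus B:\sigma_u=s\}|$; every other summand involves only $|A\setminus B|$ (which is a property of $G$, not of $\sigma_A$) or only $\sigma_C$, and is therefore constant in $\sigma_A$.

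To make this residual piece negligible, combine two standard estimates: the radius bound $r\le \tfrac{\log n}{10\log(2(a+(k-1)b))}$ together with a union bound on BFS layers of the sparse random graph gives $|A|\le n^{\alpha}$ for some $\alpha<\tfrac12$ a.a.s., while Hoeffding on the i.i.d.\ uniform labels $\{\sigma_w\}_{w\in C}$ gives $\max_s|\Delta_s|=O(\sqrt{n\log n})$ a.a.s. Together,
\[
\Bigl|\tfrac{a-b}{n}\sum_s m_s\Delta_s\Bigr|\le \tfrac{a-b}{n}\cdot|A|\cdot O(\sqrt{n\log n})=o(1),
\]
so $\Phi^{\mathrm{out}}(\sigma_A,\sigma_C)=C(\sigma_C)(1+o(1))$ uniformly in $\sigma_A$. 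Normalizing gives $\Pr[\sigma_A\mid\sigma_B,\sigma_C,G]\propto_{\sigma_A}\Phi^{\mathrm{in}}(\sigma_A)(1+o(1))$, and averaging the same expression over the balanced typical $\sigma_C'$ under $\Pr[\cdot\mid\sigma_B,G]$ preserves the $(1+o(1))$ factor and yields the same expression for $\Pr[\sigma_A\mid\sigma_B,G]$, establishing the lemma.

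The main obstacle is the transfer of concentration to the conditional measure: one must verify that conditioning on the sparse graph $G$ and on the $n^{o(1)}$-sized labeling $\sigma_B$ does not distort $(n_s(C))_s$ enough to violate the $\sqrt{n\log n}$ deviation bound. The cleanest route is a moment/coupling argument bounding the Radon--Nikodym derivative between the prior on $\sigma_C$ and the posterior given $(\sigma_B,G)$ by $n^{o(1)}$ with high probability, so that rare deviation events under the prior remain rare under the posterior.
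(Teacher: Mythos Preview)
The paper does not itself prove this lemma: it is quoted from \cite{MNS:2012} (originally for $k=2$, with the remark that the same argument works for general $k$), so there is no in-paper proof to compare against line by line. That said, your approach is exactly the one used in \cite{MNS:2012}: Bayes' rule, the factorization of $\Pr[G\mid\sigma]$ over pairs, the structural observation that there can be no edges between $A\setminus B$ and $C$ (so all cross factors are non-edge factors), the log expansion $\log(1-x/n)=-x/n+O(n^{-2})$, and then the estimate that the only $\sigma_A$-dependent cross term is $\tfrac{a-b}{n}\sum_s m_s(\sigma_A)\Delta_s=o(1)$ using $|A|\le n^{\alpha}$ for some $\alpha<\tfrac12$ together with $\max_s|\Delta_s|=O(\sqrt{n\log n})$.

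The point you flag at the end --- that the concentration of $(n_s(C))_s$ must survive passage to the posterior $\Pr[\sigma_C\mid\sigma_B,G]$ when you average to get $\Pr[\sigma_A\mid\sigma_B,G]$ --- is indeed the one place that requires care, and is the same delicate step in \cite{MNS:2012}. Your proposed fix via a Radon--Nikodym bound is viable; an equivalent (and perhaps cleaner) route is to observe that the statement is asserted a.a.s.\ over the joint draw of $(G,\sigma)$, and that the event ``$|A|\le n^{\alpha}$ and $\max_s|\Delta_s|\le C\sqrt{n\log n}$'' holds with probability $1-o(1)$ under this joint law; on that event, $\Pr[\sigma_A\mid\sigma_B,\sigma_C,G]$ is $(1+o(1))$ times a quantity depending only on $(\sigma_B,G)$, uniformly in the realized $\sigma_C$, and the claim for $\Pr[\sigma_A\mid\sigma_B,G]$ then follows by integrating out $\sigma_C$ over the high-probability set. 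Either way, this is a packaging issue rather than a missing idea, and your sketch is substantively correct and aligned with the cited proof.
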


In~\cite{MNS:2012}, the lemmas above are stated for the case when $k = 2$;
however, the same proofs apply for any value of $k$.  Armed with
Lemmas~\ref{lemma:tree-coupling},~\ref{lem:MNSsep} and
Proposition~\ref{prop:k_threshold}, we can now prove Theorem~\ref{thm:k_clusters}.

\begin{proof}[Proof of Theorem \ref{thm:k_clusters}]
We begin by proving an analogous result for a broadcast process on a
Galton-Watson tree. Let $T$ be a Galton-Watson tree with average degree $d = (a
+ (k-1)b)/k$. Let $(T, \tau, R) \sim \T( T, k, \eta, p)$, where $\eta =
\frac{b}{a + (k-1)b}$.  Fix some radius $r$ around $\rho$, and let $W_1 = R \cap
T_r$.

Now, we will bound the number of nodes in $W_1$---this will allow us to argue
that as $p \to 0$, $T_r\cap R = \emptyset$.  Let $X_i = |\partial T_i|$; we
argue inductively that $\E[X_i] = d^i$.  Clearly, $X_0 = 1$. For the inductive
step, $\E[X_i~|~X_{i-1}] = d\cdot\E[X_{i-1}]$, and so $\E[|W_1|] = \E[p\sum_{i =
0}^r X_i] = O(pd^r)$.  Applying Markov's Inequality, $\Pr[|W_1| \ge
\tfrac{1}{2}] \le O(pd^r)$.  Let $r = - \frac{1}{2} \log_{d}(p)$, so as $p \to 0$, $r \to
\infty$ and  $\Pr[W_1 \neq \emptyset] \to 0$.

Using this, as $(p, r) \to (0, \infty)$, we have
\begin{equation} \label{eq:tree_equivalence}
\Pr[\tau_v = i~|~ \tau_{W_1}, \tau_{\partial T_r}]
= \Pr[\tau_v = i~|~ \tau_{\partial T_r}]
\qquad a.a.s. ~~\forall i \in [k].
\end{equation}

Since $\theta_k(a, b)$ is false by assumption, $\pi_k(\eta, d)$ is false for the
values of $\eta, d$ given above.  Thus, we can apply Proposition
\ref{prop:k_threshold} to see that,
\begin{equation} \label{eq:nonrec}
\lim_{r \to \infty} \Pr[\tau_v = i~|~\tau_{\partial T_r}]
~=~ \Pr[\tau_v = i] = \frac{1}{k} \qquad \forall i \in [k].
\end{equation}

If we wanted Theorem~\ref{thm:k_clusters} to hold for $T$ rather than $G$ we
would be done. By the Markov property of the broadcast process on $T$,
the information from $\tau_{\partial T_r}$ isolates $\rho$ from the effects of
information beyond $T_r$. However, because we are not in a tree, we must now take
some extra care to apply this conclusion to $G$.

Now, we translate these results to the stochastic block model setting.  We first
apply the coupling in Lemma~\ref{lemma:tree-coupling}~ to
~(\ref{eq:tree_equivalence})~ and ~(\ref{eq:nonrec})---it is clear that the
revealed nodes in $T$ can be coupled with the revealed nodes in $G$.  Let $R_1 =
R \cap G_r(v)$ and let $B = \{u \in G ~|~ d(u,v) = r\}$. Then, we have the
\begin{equation} \label{eq:max_prob}
\lim_{(p, r) \to (0, \infty)} \lim_{n \to \infty} \Pr[\sigma_v = i~|~
\sigma_{R_1}, \sigma_B, G, R] ~=~ \Pr[\sigma_v = i ~|~ \sigma_B, G, R] ~=~
\frac{1}{k} \qquad \forall i \in [k].
\end{equation}

All that now remains is to prove that global information does not help in the
block model setting. To do this, we will look at the entropy of $\sigma_v$
conditioned on different sets of variables.

Using~(\ref{eq:max_prob}) it is clear that in the limit as $n \to \infty$ and
$(p, r) \to (0, \infty)$, $H(\sigma_{v} ~|~ G, R, \sigma_{R_1}, B)$ has the
maximum possible value. By applying Lemma~\ref{lem:MNSsep}, we know that
$H(\sigma_v ~|~ G, R, \sigma_{R_1}, \sigma_{B}) = (1 + o(1)) H(\sigma_{v} ~|~ G,
R, \sigma_{R}, \sigma_{C}, \sigma_{B})$, and hence in the asymptotic limit the
latter conditional entropy is also the maximum possible. Then, by monotonicity
of conditional entropy,
\[ H(\sigma_v ~|~ G, R, \sigma_{R},\sigma_{B}, \sigma_{C}) \leq H( \sigma_{v}
~|~ G, R, \sigma_{R}). \]
Thus, we get that $\lim_{p \to 0} \lim_{n \to \infty} H(\sigma_v ~|~ G, R,
\sigma_{R})$ is the maximum possible. This completes the proof of the theorem.
\end{proof}

\bigskip
In the special case of $k = 2$ clusters, it is possible to prove the same result
using a slightly different technique. Here, we get a more explicit convergence
rate in terms of $p$. Note that the RHS in the statement of
Theorem~\ref{thm:twoclusters} cannot be smaller than $p$, since with probability
$p$ the node of the label itself is revealed. \medskip

\begin{theorem} \label{thm:twoclusters}
Fix $v \in [n]$, and let $(G,R,\sigma) \sim \G(n,2,a,b,p)$, for $a + b > 2$.
Then if $(a - b)^2 < 2 (a + b)$, then
\[ \lim_{n \rightarrow \infty} \E \left| \Pr[\sigma_v = 1 ~|~ G, R, \sigma_{R}] -
\frac{1}{2} \right| \leq \frac{1}{2} \sqrt{\frac{p}{1 - \frac{(a -b)^2}{2(a + b)} }} \]
\end{theorem}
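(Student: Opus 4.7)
The plan is to reduce the block-model quantity to a Galton--Watson tree and then run a second-moment recursion whose contraction rate is exactly the Kesten--Stigum parameter $(1-2\eta)^{2} d = (a-b)^{2}/(2(a+b))$. Writing $\chi_u := 2\sigma_u - 1 \in \{\pm 1\}$ and $Z := \E[\chi_v \mid G,R,\sigma_R]$, the quantity to bound is $\tfrac12 \E|Z|$, and by Cauchy--Schwarz $\E|Z| \le \sqrt{\E[Z^{2}]}$, so it suffices to prove
\[
\E[Z^{2}] \;\le\; \frac{p}{1-\tfrac{(a-b)^{2}}{2(a+b)}}.
\]

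For the reduction to a tree, I would pick $r = r(n) = \Theta(\log n)$ as in Lemma~\ref{lemma:tree-coupling} and couple $G_r(v)$ with the depth-$r$ truncation $T_r$ of a Galton--Watson tree with offspring law $\Pois(d)$, $d=(a+b)/2$, carrying the two-label broadcast process with flip probability $\eta = b/(a+b)$. Adjoining $\sigma_{\partial G_r(v)}$ to the conditioning can only enlarge $\E|Z|$, because $\E[\chi_v\mid\mathcal F] = \E[\E[\chi_v\mid\mathcal F,\sigma_{\partial G_r}]\mid\mathcal F]$ and Jensen's inequality give $\E|Z| \le \E|Z'|$ for the augmented posterior $Z'$. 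Lemma~\ref{lem:MNSsep} then asserts that, conditional on $\sigma_{\partial G_r}$, the exterior contributes only a $1+o(1)$ factor to the interior posterior, so via the coupling it is enough to bound $\E[(Z^{(r)})^{2}]$ for the tree quantity $Z^{(\ell)} := \E[\chi_\rho\mid\tau_{R\cap T_\ell},\tau_{\partial T_\ell},T_\ell]$.

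The core of the argument is a one-step recursion for $q_\ell := \E[(Z^{(\ell)})^{2}]$. Using the belief-propagation update
\[
Z^{(\ell)} \;=\; \tanh\!\Bigl(\textstyle\sum_{c}\tanh^{-1}\!\bigl((1-2\eta) Z^{(\ell-1)}_{c}\bigr)\Bigr) \qquad (\rho\notin R,\ \ell\ge 1),
\]
and $Z^{(\ell)} = \chi_\rho$ whenever $\rho \in R$ or $\ell = 0$, the key claim is
\[
q_\ell \;\le\; p + (1-p)(1-2\eta)^{2}\, d\, q_{\ell-1},\qquad q_0 = 1.
\]
Because $(1-p)(1-2\eta)^{2} d < 1$ by hypothesis, iterating gives $q_r \le p/(1-(1-2\eta)^{2}d) + ((1-p)(1-2\eta)^{2}d)^{r}$, and letting $r = r(n)\to\infty$ (slowly enough for the coupling to hold) kills the geometric tail and delivers the target inequality.

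The main obstacle is the per-level contraction $\E[(Z^{(\ell)})^{2}\mid\rho\notin R] \le (1-2\eta)^{2}\sum_{c}\E[(Z^{(\ell-1)}_{c})^{2}]$. For a \emph{single} child this is exact and equivalent to the strong data processing inequality for the binary symmetric channel: since $\chi_\rho\to\chi_c$ is a BSC-$\eta$ and the subtree observations are conditionally independent of $\chi_\rho$ given $\chi_c$, one has $\E[\chi_\rho\mid\mathrm{obs}_c] = (1-2\eta) Z_c$, whose squared $L^{2}$ norm is $(1-2\eta)^{2}\E[Z_c^{2}]$. For several children the subtrees are only conditionally independent \emph{given} $\chi_\rho$, inducing sibling correlations $\E[Z_c Z_{c'}] = (1-2\eta)^{2}(\E[Z_c^{2}])^{2}$. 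I would handle these via $\tanh^{2}(x)\le x^{2}$, expanding the square of the BP argument, conditioning on $\chi_\rho$ to use the conditional independence, and absorbing the positive cross-terms either by the standard tree-SDPI machinery of Evans--Kenyon--Peres--Schulman or into the slack $(1-p)(1-2\eta)^{2}d < 1$. Averaging over the $\Pois(d)$ offspring then produces the factor $d$ and closes the recursion.
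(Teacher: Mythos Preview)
Your plan is essentially the paper's: couple to the Poisson Galton--Watson tree (Lemma~\ref{lemma:tree-coupling}), adjoin $\tau_{\partial T_r}$ (Jensen for the upper bound, Lemma~\ref{lem:MNSsep} to discard the exterior), pass from $L^1$ to $L^2$ via Cauchy--Schwarz, and control the second moment through the EKPS contraction with rate $(1-2\eta)^2 d=(a-b)^2/(2(a+b))$. The only difference is packaging. The paper applies Proposition~\ref{prop:EKPS} once to the full observation set $W=(R\cap T_r)\cup\partial T_r$, obtaining $\sum_{v\in W}(1-2\eta)^{2d(v,\rho)}$; taking expectation over $(T,R)$ yields exactly your geometric series $p\sum_{j\ge 0}((1-2\eta)^2 d)^j+((1-2\eta)^2 d)^r$. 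Your level-by-level recursion $q_\ell\le p+(1-p)(1-2\eta)^2 d\,q_{\ell-1}$ is the same bound unrolled one layer at a time, and your ``key claim'' is precisely the EKPS subadditivity lemma.

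One caution on the step you flag as the obstacle. The route ``$\tanh^2(x)\le x^2$, then expand $(\sum_c\tanh^{-1}((1-2\eta)Z_c))^2$'' does not work as written: $\tanh^{-1}$ is \emph{expanding} ($|\tanh^{-1}(y)|\ge|y|$), so the diagonal terms are not bounded by $(1-2\eta)^2\E[Z_c^2]$ this way, and the cross-terms $\E[Z_cZ_{c'}]=(1-2\eta)^2(\E[Z_c^2])^2$ are of the same order as the main terms and are not absorbed merely by the slack $(1-2\eta)^2 d<1$. You must genuinely invoke the EKPS subadditivity (your first alternative), which is exactly what the paper does in one shot via Proposition~\ref{prop:EKPS}.
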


For this better dependence, we rely on a result of Evans \etal~\cite{EKPS:2000}
regarding predicting the label of the root, when the labels of some nodes in
the tree are revealed. \medskip

\begin{proposition}[\cite{EKPS:2000}] \label{prop:EKPS} Let $W$ be a finite set of
nodes in the tree $T$. Let $(T, \tau) \sim \T(T, 2, \eta)$ be a labeling of a
tree obtained by the broadcast process as defined in
Section~\ref{sec:model-trees} with alphabet $\Sigma = \{\pm 1\}$,
and parameter $\eta$. Let $S$ be any set of nodes
that separates the root from $W$.  Then,
\[
\bigg(\E\left[ \left|\E[\tau_\rho ~|~ \tau_W] \right| \right] \bigg)^2
\le
2\sum_{v\in S} (1 - 2\eta)^{2d(v,\rho)}
\]
\end{proposition}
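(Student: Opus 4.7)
My plan for the proof has three parts: a Markov-separation reduction from $W$ to the cut $S$, a Cauchy--Schwarz step passing from $\E|Z|$ to the squared Hellinger distance, and a recursive bound on the Hellinger distance using the broadcast tree structure. The first two parts are routine; the third is where all the work lies.

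\textbf{Steps 1 and 2 (routine reductions).} Since $S$ separates $\rho$ from $W$, the tree Markov property of the broadcast gives $\tau_\rho$ conditionally independent of $\tau_W$ given $\tau_S$, so $\E[\tau_\rho \mid \tau_W] = \E[\,\E[\tau_\rho \mid \tau_S] \mid \tau_W\,]$; applying conditional Jensen to $|\cdot|$ and then taking expectations yields $\E|\E[\tau_\rho \mid \tau_W]| \le \E|\E[\tau_\rho \mid \tau_S]|$. Writing $Z = \E[\tau_\rho \mid \tau_S]$ and $P_\pm$ for the law of $\tau_S$ given $\tau_\rho = \pm 1$, Cauchy--Schwarz gives $(\E|Z|)^2 \le \E[Z^2]$; combining this with the identity $Z(\sigma) = (P_+(\sigma) - P_-(\sigma))/(P_+(\sigma) + P_-(\sigma))$ and the pointwise inequality $(a-b)^2 \le 2(a+b)(\sqrt{a}-\sqrt{b})^2$ gives $\E[Z^2] \le H^2(P_+, P_-)$, where $H^2$ denotes squared Hellinger. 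So the target reduces to $H^2(P_+, P_-) \le 2\sum_{v \in S} \theta^{2 d(v,\rho)}$ with $\theta = 1 - 2\eta$.

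\textbf{Step 3 (Hellinger recursion on the tree).} Two ingredients do the work. First, $H^2$ is subadditive on product measures; since conditioning on $\tau_\rho$ makes the observations in the subtrees of $\rho$'s children independent, $P_\pm$ factor as products across those subtrees and their Hellinger squared is bounded by the sum of the subtree Hellinger squareds. Second, across a single broadcast edge with flip probability $\eta$, if $P_\pm$ are the laws of the observations in a subtree given the child's label, then the laws given the parent's label are the mixtures $R_\pm = (1-\eta) P_\pm + \eta P_\mp$, and I claim $H^2(R_+, R_-) \le \theta^2\, H^2(P_+, P_-)$. Iterating subadditivity at each internal branching and this contraction along each tree edge, starting from the base value $H^2(\delta_+, \delta_-) = 2$ at each observed leaf $v \in S$, telescopes to $H^2(P_+, P_-) \le 2\sum_{v \in S} \theta^{2 d(v, \rho)}$, which is exactly what is needed.

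\textbf{Main obstacle.} The crux is the per-edge Hellinger contraction by $\theta^2$, which does not follow from generic data-processing (the data-processing inequality alone only gives $H^2(R_+, R_-) \le H^2(P_+, P_-)$, without any $\theta$-factor). I would prove it pointwise by first computing $R_+(x) R_-(x) = \eta(1-\eta) (P_+(x) + P_-(x))^2 + \theta^2 P_+(x) P_-(x)$, and then applying Cauchy--Schwarz with the unit vector $(\sqrt{1 - \theta^2}, \theta)$ (using $4\eta(1-\eta) = 1 - \theta^2$) to obtain the pointwise lower bound $\sqrt{R_+(x) R_-(x)} \ge \tfrac{1 - \theta^2}{2}(P_+(x) + P_-(x)) + \theta^2 \sqrt{P_+(x) P_-(x)}$; integrating in $x$ and translating back to $H^2$ yields the contraction with the exact $\theta^2$ constant, which is also what is responsible for the clean factor of $2$ in the final bound.
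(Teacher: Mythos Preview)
The paper does not prove this proposition: it is simply quoted from Evans, Kenyon, Peres and Schulman~\cite{EKPS:2000} and then used as a black box in the proof of Theorem~\ref{thm:twoclusters}. So there is no in-paper argument to compare against.

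That said, your proof is correct. The reduction from $W$ to the separating set $S$ via the tree Markov property and conditional Jensen is fine; the passage from $(\E|Z|)^2$ to $\E[Z^2]$ and then to squared Hellinger via the pointwise inequality $(a-b)^2 \le 2(a+b)(\sqrt a - \sqrt b)^2$ is clean. The per-edge contraction $H^2(R_+,R_-) \le \theta^2 H^2(P_+,P_-)$ is indeed the crux: the identity $R_+(x)R_-(x) = \eta(1-\eta)\bigl(P_+(x)+P_-(x)\bigr)^2 + \theta^2 P_+(x)P_-(x)$ checks out, and since $(\sqrt{1-\theta^2},\theta)$ is a unit vector (using $4\eta(1-\eta)=1-\theta^2$), Cauchy--Schwarz yields the pointwise lower bound on $\sqrt{R_+R_-}$, which integrates to $1 - \tfrac{1}{2}H^2(R_+,R_-) \ge (1-\theta^2) + \theta^2\bigl(1 - \tfrac{1}{2}H^2(P_+,P_-)\bigr)$. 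Combined with subadditivity of $H^2$ on products and the base value $H^2(\delta_+,\delta_-)=2$, the recursion telescopes to exactly $2\sum_{v\in S}\theta^{2d(v,\rho)}$. This is in the spirit of the original argument in~\cite{EKPS:2000}, which also proceeds via a recursive distance bound along the broadcast tree.
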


\bigskip
\begin{proof}[Proof of Theorem~\ref{thm:twoclusters}.] As in the previous proof,
let $T$ be a Galton-Watson tree with  degree distribution $\Poisson(d)$, for $d
= (a + b)/2$.  For notational convenience, let the set of labels be $\Sigma =
\{\pm 1\}$. Let $(T, \tau, R) \sim \T(T, 2, \tfrac{b}{a + b},p)$. Fix some
radius $r$, and let $W_1 \subseteq T = R \cap T_r$. For any integer $j$,
let $X_j = |W_1 \cap \partial T_j|$. Let $W_2 = \partial T_r = \{v \in T ~|~
d(v, \rho) = r\}$.  Let $W = W_1 \cup W_2$.

We consider the question of predicting the label $\tau_{\rho}$, given all the
labels $\tau_{W}$. Note that when $\theta_2(a,b)$ is false, for the parameters
above $(1 -2\eta)^2 d = (a - b)^2/(2(a + b)) < 1$. Then, using
Proposition~\ref{prop:EKPS}, we have the following:
\begin{align}
\bigg(\E \left|\E[\tau_\rho~|~\tau_W]\right| \bigg)^2
&\leq 2\sum_{v\in W} (1 - 2 \eta)^{2d(v,\rho)}\nonumber \\
&= 2\sum_{v\in W_1} (1 - 2 \eta)^{2d(v,\rho)} + 2\sum_{v\in W_2} (1 - 2 \eta)^{2r}\nonumber \\
&= 2\sum_{j=0}^{r-1} X_j (1 - 2 \eta)^{2j} + 2|\partial T_r|(1 - 2 \eta)^{2r}
\intertext{If we take expectation with respect to the choice of revealed nodes and
the Galton Watson Tree process, since
$\E\left[X_j ~|~ |\partial T_j|\right] = p |\partial T_j|$ and
$\E\left[|\partial T_j|\right] = d^{j}$,}
\E_{T, R}\left[\bigg(\E \left|\E[\tau_\rho~|~\tau_W] \right| \bigg)^2 \right]
&\leq p \left(\sum_{j = 0}^{r-1} (d (1 - 2 \eta)^2)^j
\right) + ((1 - 2 \eta)^2 d)^{r} \nonumber  \\
&\leq \frac{p}{1 - d(1 - 2\eta)^2} + ((1 - 2 \eta)^2 d)^{r} \label{eq:phalf}
\end{align}
Notice that since $(1 - 2\eta)^2d < 1$, $((1 - 2\eta)^2 d)^r \rightarrow 0$ as
$r \rightarrow \infty$.

The rest of the proof proceeds analogously to the proof of
Theorem~\ref{thm:k_clusters} starting at~(\ref{eq:tree_equivalence}) and
applying the Cauchy-Schwarz inequality to~(\ref{eq:phalf}).
\end{proof}


\setcounter{proposition}{0}
\setcounter{conjecture}{0}

\bibliography{all-refs,all}
\bibliographystyle{plain}

\newpage

\appendix
\section{When Little Information Helps}
\label{sec:appendix}
Here, we prove the simple observations described in Section~\ref{sec:intro}
which illustrate the power and limitations of revealed labels in the stochastic
block model.

\subsection{Proof of Proposition~\ref{prop:permutation}}

\begin{proposition} \label{prop:permutation}
Let $C:[n] \to [k]$ be the output of some clustering algorithm with the guarantee
that there exists a permutation $\pi:[k] \to [k]$ such that
\[
\frac{1}{n} \sum_{i} \one[\pi(C(i)) = \sigma_i] \ge \frac{1}{k} + \epsilon,
\]
Then for $p \ge \tfrac{1}{n} \tfrac{512 k}{\epsilon^3} \log\tfrac{4k}{\delta}$,
if a $p$-fraction of node labels are revealed, we can find a function $g:[k] \to
[k]$ such that
\[
\frac{1}{n} \sum_{i} \one[g(C(i)) = \sigma_i] \ge \frac{1}{k} + \frac{\epsilon}{2}
\]
with probability at least $1-\delta$.
\end{proposition}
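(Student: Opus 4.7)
The plan is to use a simple majority-vote algorithm: for each output cluster $j \in [k]$, collect the revealed labels of nodes assigned to $j$ by $C$, and define $g(j)$ to be the most frequent label among them (breaking ties arbitrarily, and picking $g(j)$ arbitrarily if no node of $C^{-1}(j)$ is revealed). I will show that the labeling $g \circ C$ agrees with $\sigma$ on at least $n/k + \epsilon n/2$ nodes with probability at least $1-\delta$.

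Write $S_j = C^{-1}(j)$, $N_{j,\ell} = |\{i \in S_j : \sigma_i = \ell\}|$, and $\hat N_{j,\ell} = |\{i \in S_j \cap R : \sigma_i = \ell\}|$. The hypothesis is $\sum_j N_{j,\pi(j)} \ge n/k + \epsilon n$, and the target is $\sum_j N_{j,g(j)} \ge n/k + \epsilon n/2$. The key deterministic observation is that by definition $\hat N_{j,g(j)} \ge \hat N_{j,\pi(j)}$, so on the good event that $|\hat N_{j,\ell} - p N_{j,\ell}| \le p\epsilon |S_j|/8$ for every label $\ell$, one concludes $N_{j,g(j)} \ge N_{j,\pi(j)} - \epsilon|S_j|/4$. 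I split the clusters into ``big'' ($|S_j| \ge \epsilon n/(4k)$) and ``small''; the small clusters contribute at most $k \cdot \epsilon n/(4k) = \epsilon n/4$ to any sum of $N_{j,\ell}$'s, so discarding them costs at most $\epsilon n/4$ in the final bound. Summing the per-cluster inequality over the big clusters on the good event then yields $\sum_j N_{j,g(j)} \ge \sum_j N_{j,\pi(j)} - \epsilon n/4 - \epsilon n/4 \ge n/k + \epsilon n/2$.

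It remains to show that the good event holds on every big cluster simultaneously. For each big $j$ and each $\ell$, $\hat N_{j,\ell}$ is $\Binom(N_{j,\ell}, p)$, so Bernstein's inequality yields $\Pr[|\hat N_{j,\ell} - p N_{j,\ell}| > p\epsilon|S_j|/8] \le 2\exp(-\Omega(\epsilon^2 p|S_j|))$. Combining the hypothesis $p \ge \tfrac{512k}{\epsilon^3 n}\log(4k/\delta)$ with $|S_j| \ge \epsilon n/(4k)$ gives $\epsilon^2 p|S_j| = \Omega(\log(4k/\delta))$, so a union bound over the at most $k^2$ relevant pairs $(j,\ell)$ makes the good event hold with probability at least $1-\delta$.

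The main obstacle is choosing a concentration inequality sharp enough to work in the regime $p = \Theta(1/n)$ allowed by the statement: additive Hoeffding requires $p = \Omega(1/\sqrt n)$ and is too weak, whereas the Bernstein/multiplicative-Chernoff form exploits the $p N_{j,\ell}(1-p)$ variance and suffices. The $\epsilon^{-3}$ scaling in the lower bound on $p$ decomposes cleanly into an $\epsilon^{-1}$ factor from the big/small threshold (so that both the ignored mass and the cumulative per-cluster error budget scale as $\epsilon n$) and an $\epsilon^{-2}$ factor from the per-cluster sample complexity of estimating the label distribution to additive accuracy $\Theta(\epsilon)$.
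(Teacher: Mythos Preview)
Your proposal is correct and follows essentially the same plan as the paper: plurality vote within each output cluster, the same big/small split at threshold $\epsilon n/(4k)$, and concentration to show the empirical plurality nearly maximizes $N_{j,\ell}$. The only cosmetic difference is in the concentration step---the paper first uses a Chernoff bound to guarantee each big cluster has $\Omega(p|S_j|)$ revealed nodes and then applies an additive Hoeffding bound conditional on that sample size (packaged as a separate plurality lemma), whereas you apply Bernstein directly to each $\hat N_{j,\ell}$; both routes give the same $\epsilon^{-3}$ dependence.
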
 \medskip

The proof follows easily from the following lemma, which is a simple application
of the Chernoff-Hoeffding bound. \medskip

\begin{lemma} \label{lem:plurality}
Let $D$ be a probability distribution over $[k]$, and let $S\sim D^m$ be a
sample. When $m \ge \frac{64}{\epsilon^2}\log(\frac{4k}{\delta})$, for $i =
\mathrm{plurality}(S)$ (ties may be broken arbitrarily), with probability at least $1 - (\delta/2)$,
\[
|D_i - \max_{j} D_{j}| \le \frac{\epsilon}{4},
\]
where $D_j$ is the probability of $j$ under $D$.
\end{lemma}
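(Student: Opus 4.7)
The plan is a standard Chernoff--union bound argument: show that with high probability the empirical frequencies $\hat D_j = \tfrac{1}{m}|\{s\in S : s = j\}|$ uniformly approximate the true probabilities $D_j$, and then deduce that the empirical plurality must have nearly maximal true probability.

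First, for each fixed $j \in [k]$, the sum $m\hat D_j$ is a $\Binom(m, D_j)$ random variable, so by Hoeffding's inequality,
\[
\Pr\!\left[\, |\hat D_j - D_j| > \tfrac{\eps}{8}\,\right] \;\le\; 2\exp\!\left(-\tfrac{m\eps^2}{32}\right).
\]
Taking a union bound over the $k$ coordinates gives
\[
\Pr\!\left[\,\exists j \in [k]: |\hat D_j - D_j| > \tfrac{\eps}{8}\,\right]
\;\le\; 2k\exp\!\left(-\tfrac{m\eps^2}{32}\right).
\]
For $m \ge \tfrac{64}{\eps^2}\log(\tfrac{4k}{\delta})$, the exponent is at most $-2\log(4k/\delta)$, so the right-hand side is bounded by $2k\cdot (\delta/(4k))^2 = \delta^2/(8k) \le \delta/2$.

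Now condition on the good event $\event$ that $|\hat D_j - D_j| \le \eps/8$ for every $j\in[k]$. Let $j^* \in \arg\max_j D_j$, and let $i = \mathrm{plurality}(S) \in \arg\max_j \hat D_j$. By the definition of $i$ and the good event,
\[
\hat D_i \;\ge\; \hat D_{j^*} \;\ge\; D_{j^*} - \tfrac{\eps}{8},
\]
and applying the good event to coordinate $i$ as well,
\[
D_i \;\ge\; \hat D_i - \tfrac{\eps}{8} \;\ge\; D_{j^*} - \tfrac{\eps}{4}.
\]
Since $D_i \le D_{j^*} = \max_j D_j$ by definition of $j^*$, we conclude $|D_i - \max_j D_j| \le \eps/4$, which holds with probability at least $1 - \delta/2$ as required.

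There is essentially no technical obstacle here; the only ``choice'' is to pick the slack $\eps/8$ in the concentration bound so that two applications (one for $j^*$, one for $i$) accumulate to $\eps/4$, and to choose the constants in the sample-size bound large enough that the union-bound factor of $k$ is absorbed into the $\log(4k/\delta)$ term.
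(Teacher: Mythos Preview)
Your proof is correct and follows essentially the same Chernoff--Hoeffding plus union bound argument as the paper: estimate each $\hat D_j$ to within $\eps/8$ uniformly, then combine two such deviations to bound $|D_i - \max_j D_j|$ by $\eps/4$. The only cosmetic difference is that you invoke the sharper Hoeffding constant $2\exp(-2mt^2)$ (yielding some unneeded slack), whereas the paper writes the looser $2\exp(-mt^2)$; the structure is identical.
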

\begin{proof}
For any $j \in [k]$, let $\hat D_j$ be the fraction of of $j$ in $S$.  By the
Chernoff-Hoeffding bound, $\Pr[|D_j - \hat D_j| \ge \alpha] \le
2\exp(-m\alpha^2)$.  By union bound, the probability that this happens for any
$j \in [k]$ is at most $2k\exp(-m\alpha^2)$. Thus, if we let $m \ge
\frac{1}{\alpha^2}\log(\frac{4k}{\delta})$, this happens with probability at
most $\delta/2$.  Hence, we have $|D_i - \max_{j} D_{j}| \leq 2\alpha$ with
probability at least $1-\delta/2$.  Letting $\alpha = \frac{\epsilon}{8}$
completes the proof.
\end{proof}
\medskip

\begin{proof}[Proof of Proposition~\ref{prop:permutation}]
Let $C:[n] \to [k]$ be a clustering with the assumed property, and let $C_i =
\{v \in [n] ~|~ C(v) = i\}$. If $|C_i| \leq \tfrac{\epsilon n}{4k}$, we assign
each node in $C_i$ a random label.

Let $Y = \{ i ~|~ |C_i| \geq \tfrac{\epsilon n}{4k} \}$. Then for each $i \in
Y$, let $R_i \subseteq C_i$ denote the subset of nodes that are revealed in
$C_i$. Note that $\E[|R_i|] = p |C_i| \geq
\tfrac{128}{\epsilon^2}\log(\tfrac{4k}{\delta})$, for the value of $p$ in the statement
of the proposition.  By a simple Chernoff bound, $\Pr\big[|R_i| < \tfrac{1}{2}
\E[|R_i|]\big] \leq \frac{\delta}{4k}$, whenever $|C_i| \geq \epsilon n/4k$.
Thus, by union bound, for all $i \in Y$, $|R_i| \geq \frac{4k}{\epsilon^2}
\log(\tfrac{64}{\delta})$ except with probability $\delta/2$. We assume that
this is the case for the rest of the proof, allowing the procedure to fail with
probability $\delta/2$.

Now for any $i \in Y$, let $g(i) = \mathrm{plurality}(R_i)$.  By
Lemma~\ref{lem:plurality}, except with probability $\delta/2$, for all $i \in
Y$, $\max_{j \in [k]} \frac{1}{|C_i|} \sum_{v \in C_i} \one( \sigma_v = j) \leq
\frac{1}{|C_i|}\sum_{v \in C_i} \one( g(i) = \sigma_v) + \tfrac{\epsilon}{4}$.
Let $\pi : [k] \rightarrow [k]$ be the optimal permutation given $\langle C_i
\rangle_{i =1}^k$. Then, we have the following,

\begin{align*}
\frac{1}{n} \sum_{v} \one( \pi(C(v)) = \sigma_v) &\leq \frac{1}{n} \sum_{i =
1}^k \max_{j \in [k]} \sum_{v \in C_i} \one( \sigma_{v} = j)  \\
&\leq \frac{1}{n} \sum_{i \in Y} \left( \sum_{v \in C_i} \one(\sigma_v =
g(i)) + |C_i| \frac{\epsilon}{4} \right) + \frac{1}{n} \sum_{i \not\in Y} |C_i| \\
&\leq \frac{1}{n} \sum_{v \in [n]} \one(g(C(i)) = \sigma_v) +
\frac{\epsilon}{4n} \sum_{i \in Y} |C_i|  + \frac{1}{n} \sum_{i \not\in Y} |C_i|
\intertext{Clearly, $\sum_{i \in Y} |C_i| \leq n$ and $\sum_{i \not\in Y} |C_i|
\leq k \cdot \frac{\epsilon n}{4k} \leq \tfrac{\epsilon n}{4}$ . Hence, we have,}
\frac{1}{n} \sum_{v} \one (\pi(C(v) = \sigma_v)) &\leq \frac{1}{n} \sum_{v \in
[n]} \one(g(C(v)) = \sigma_v) + \frac{\epsilon}{2}
\end{align*}

Since by the hypothesis of the proposition, the LHS of the above inequality is
at least $\tfrac{1}{k} + \epsilon$, the assertion holds.
\end{proof}


\subsection{Proof of Proposition~\ref{prop:local_sucks}}
\label{app:local}

Now, we discuss the impact of revealed labels in the context of local
algorithms. We use the definition of local algorithms as in
\cite{GamarnikSudan:14}. (The reader is referred to their paper and references
therein for more background on local algorithms.) \medskip

\begin{definition}
Let $G$ be a graph with node set $V$, and for each $v \in V$, let $X_v \in [0,
1]$ uniformly at random. An {\em $r$-local algorithm} on $G$ is one in which the
value of each node $v \in V$ is decided by a function $f_v(G_r(v), X_r(v))$,
where $X_r(v)$ is the set of samples from $D$ associated with $G_r(v)$.
\end{definition}

Here, we justify the intuitive statement that no $r$-local algorithm can
accurately reconstruct clusters in the unlabeled stochastic block model for $r
= o(\log n)$. \medskip

\begin{proposition} \label{prop:local_sucks}
In the unlabeled stochastic block model, let $A$ be a local algorithm with
node functions $\{f_v\}:G_r(v) \to \Sigma$, where here $G_r(v)$ denotes the
structural information and random variables on the neighborhood of radius $r =
o(\log n)$ around $v$. Then for all $\eps > 0$,
\[
\lim_{n \to \infty} \Pr_{G, X}[\max_{\pi} \frac{1}{n} \sum_v 1(f_v(G_r(v)) = \pi(\sigma_v)) \geq \frac{1}{k} + \eps] = 0,
\]
where the maximum is taken over all possible permutations of the labels.
\end{proposition}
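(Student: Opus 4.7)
The argument rests on the label-permutation symmetry of the unlabeled block model combined with a variance bound obtained by coupling local neighborhoods to independent trees. Fix a permutation $\pi \in S_k$, write $Z_v := f_v(G_r(v), X_r(v))$ and $Y_v^\pi := \one(Z_v = \pi(\sigma_v))$, and set $S_\pi := \frac{1}{n}\sum_v Y_v^\pi$. Because the model is unlabeled (so $f_v$ depends only on graph structure and auxiliary randomness) and because the joint law of $(G, \sigma)$ is invariant under $\sigma \mapsto \rho \circ \sigma$ for every $\rho \in S_k$ (edge probabilities depend only on whether endpoint labels match), the unlabeled neighborhood $G_r(v)$ is independent of $\sigma_v$. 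Since $X_r(v)$ is also independent of $(G, \sigma)$, we obtain $Z_v \perp \sigma_v$, so $\E[Y_v^\pi] = 1/k$ and hence $\E[S_\pi] = 1/k$.

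To bound $\Var[S_\pi] = \frac{1}{n^2}\sum_{u,v}\operatorname{Cov}(Y_u^\pi, Y_v^\pi)$, I would extend Lemma~\ref{lem:treelike} to pairs of vertices: simultaneously couple $(G_r(u), G_r(v))$ together with their induced labels to two independent copies of the Galton-Watson broadcast process $\T(T,k,\tfrac{b}{a+(k-1)b})$ by running BFS from $u$ and $v$ in parallel, failing only when the two explorations collide. Each neighborhood has expected size $d^r = n^{o(1)}$ since $r = o(\log n)$ and $d$ is constant, so the collision probability for any fixed pair is at most $O(d^{2r}/n) = n^{o(1) - 1}$. Under the successful coupling the two neighborhood-plus-label objects are exactly independent, so (using the Step~1 symmetry on the tree side) $Y_u^\pi$ and $Y_v^\pi$ become independent with marginal $1/k$, giving $\operatorname{Cov}(Y_u^\pi, Y_v^\pi) = o(1)$ uniformly in $u,v$. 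Summing then yields $\Var[S_\pi] = o(1)$.

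By Chebyshev's inequality, $\Pr[|S_\pi - 1/k| \ge \epsilon] \to 0$ for each fixed $\pi$, and since $|S_k| = k! = O(1)$, a union bound over permutations finishes the argument. The main technical obstacle is the two-neighborhood extension of Lemma~\ref{lem:treelike}: one must show that a joint BFS from $u$ and $v$ avoids collision with probability $1-o(1)$ and that, conditioned on non-collision, the induced labeled subgraphs agree in distribution with two independent broadcast trees. This is a standard BFS-exploration calculation, but some care is needed because one cannot naively condition on the two neighborhoods being disjoint without biasing their marginal distributions; the cleanest fix is to run the couplings in parallel and declare failure at the first common vertex, so that on the non-failure event the two pieces of revealed randomness are disjoint by construction.
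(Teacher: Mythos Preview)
Your proposal is correct and follows the same skeleton as the paper's proof: union bound over the $k!$ permutations, then for a fixed $\pi$ show $\E[S_\pi]=1/k$ by the label-symmetry of the unlabeled model, then bound $\Var[S_\pi]$ by controlling pairwise covariances, then apply Chebyshev.

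The one substantive difference is in how you bound $\operatorname{Cov}(Y_u^\pi,Y_v^\pi)$. You propose a two-vertex extension of Lemma~\ref{lem:treelike}, coupling $(G_r(u),G_r(v))$ together with their labels to two \emph{independent} broadcast trees and arguing that under the successful coupling $Y_u^\pi \perp Y_v^\pi$. This works, but it is more machinery than needed. The paper never invokes the tree coupling here at all: it simply observes that for fixed $u\neq v$, with probability $1-o(1)$ the graph distance $d(u,v)>2r$, and on that event $G_r(u)$ and $G_r(v)$ are disjoint; then, because $|G_{2r}(u)|=O(d^{2r}\log n)=n^{o(1)}$, the conditional law of $\sigma_v$ given everything revealed about $u$'s side is within total variation $o(1)$ of uniform, which directly yields $\Pr[f_v(G_r(v))=\sigma_v \mid f_u(G_r(u))=\sigma_u, d(u,v)>2r]=1/k+o(1)$. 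So the paper's covariance bound is entirely elementary (disjointness plus a counting argument), whereas your route requires carefully setting up a joint BFS coupling and arguing about its failure probability and the unbiasedness of the conditioning---correct, but avoidable. Your approach does buy something: it would generalize more cleanly to settings where one wants to compare the \emph{joint} law of the two labeled neighborhoods to a product of tree measures, not merely to decorrelate two indicator events.
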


\begin{proof}
By the union bound over all $k!$ permutations it suffices to show that for each fixed permutation $\pi$:
\[
\lim_{n \to \infty} \Pr_{G, X}[\frac{1}{n} \sum_v 1(f_v(G_r(v)) = \pi(\sigma_v)) \geq \frac{1}{k} + \eps] = 0
\]
Without loss of generality, we may assume that $\pi$ is the identity permutation.
Let
\[
Z = \frac{1}{n} \sum_v 1(f_v(G_r(v)) = \sigma_v)
\]
Note that for each $v$, $\sigma_v$ is distributed uniformly conditioned on
$f_v(G_r(v))$  and therefore $\E[Z] = 1/k$.  Thus, in order to prove the claim it
suffices, by Chebychev's Inequality to show that $\Var[Z]$ is $o(1)$ or
equivalently that $\E[Z^2] = 1/k^2 + o(1)$.  Now
\begin{align*}
\E[Z^2] &= \frac{1}{k n} + \frac{1}{n^2} \sum_{v \neq u} \Pr[f_v(G_r(v)) =
\sigma_v, f_u(G_r(u)) = \sigma_u] \\
&= \frac{1}{k n} + \frac{1}{k n^2} \sum_{v
\neq u} \Pr[f_v(G_r(v)) = \sigma_v | f_u(G_r(u)) = \sigma_u]
\end{align*}
Thus the proof reduces to showing that for a fixed $u \neq v$ (chosen before the
graph is labeled and the edges are generated) it holds that
\[
\Pr[f_v(G_r(v)) = \sigma_v | f_u(G_r(u)) = \sigma_u] = \frac{1}{k} + o(1).
\]
Now:
$\Pr[f_v(G_r(v)) = \sigma_v | f_u(G_r(u)) = \sigma_u]$ is bounded by
\[
\Pr[f_v(G_r(v)) = \sigma_v | f_u(G_r(u)) = \sigma_u, d(u,v) > 2r]
+
\Pr[d(u,v) \leq 2 r | f_u(G_r(u)) = \sigma_u]
\]
\[
\leq \Pr[f_v(G_r(v)) = \sigma_v | f_u(G_r(u)) = \sigma_u, d(u,v) > 2r]  + o(1),
\]
since with high probability $u$ and $v$ are at distance $\Omega(\log n)$.

If there are $\gamma_i$ nodes with label $i$ in $G_{2r}(u)$, the distribution
of $\sigma_v$ for a random $v$ with $d(u,v) > 2r$ has total variation distance
at most $\tfrac{2k}{n- |G_{2r}(u)|}\sum_{i \in [k]}\gamma_i$ from uniform;
knowing $u$ was assigned $\sigma_u$ and $d(u,v) > 2r$ only yields information
about the distribution of values of $\gamma_i$, and has no other implications
for $v$.  Clearly, $\sum_{i \in [k]}\gamma_i = |G_{2r}(u)|$, and with high
probability, $|G_{r2}(u)| = O(d^{2r} \log n)$ for $d = \tfrac{a + (k-1)b}{k}$.
Thus, as $n\to\infty$, $\Pr[f_v(G_r(v)) = \sigma_v ~|~ f_u(G_r(u)) = \sigma_u,
d(u,v) > 2r] = \frac{1}{k}$ completing the proof.

\end{proof}
\bigskip


\subsection{Proof of Proposition \ref{prop:local_with_info}}

Before giving a formal statement and proof of
Proposition~\ref{prop:local_with_info}, we need to introduce some notation
related to broadcast processes on trees. Let $(T, \tau) \sim \T(T, 2, \eta)$,
where $T$ is a Galton-Watson tree with offspring distribution $\Poisson(d)$. Let
\[ \TR^*(d, \eta) = \lim_{r \rightarrow \infty} \E\left| \Pr[\tau_{\rho} = 1 ~|~
\tau_{\partial T_r}] - \frac{1}{2} \right| \]
It follows from the work of Evans~\etal that $\TR^*(d,\eta) >0$
if and only if $d (1 - 2\eta)^2 > 1$~\cite{EKPS:2000}.

Mossel~\etal~\cite{MNS:2013a} looked at the robust reconstruction problem on
trees. Let $(T, \tau) \sim \T(T, 2, \eta)$ be as defined above. For some
parameter $\delta \in [0, 1/2)$, let $\tilde{\tau}_u$ be the random variable,
such that $\tilde{\tau}_u = \tau_u$ with probability $1 - \delta$, and
$\tilde{\tau}_u = 1-\tau_{u}$ with probability $\delta$.  In~\cite{MNS:2013a},
the authors consider the question of reconstruction of the root label given the
noisy labels, $\tilde{\tau}_{\partial T_r}$,  in the limit as $r \to \infty$.
They showed that if
\[ \widetilde{\TR}^*(d, \eta) = \lim_{r \rightarrow \infty} \E\left| \Pr[\tau_{\rho}
=1 ~|~ \tilde{\tau}_{\partial T_r}] - \frac{1}{2} \right|, \]
then for any $\delta \in [0, 1/2)$, whenever $d (1 - 2\eta)^2 \geq C$ for a
sufficiently large constant $C$, $\widetilde{\TR}^*(d,\eta) = \TR^*(d, \eta)$.
\medskip

\begin{proposition} \label{prop:local_with_info}
Let $(G, R, \sigma_{R}) \sim \G(n, 2, a, b, p)$, with $a + b > 2$. Then, there
exists a large constant $C$, such that if $(a -b)^2 > C(a + b)$, there is a
local algorithm $A$ such that if $A(v)$ denotes the label output by the
algorithm, for a random node $v$,
\[ \lim_{p \to 0} \lim_{n \to \infty} \Pr[ A(v) = \sigma_v] = \frac{1}{2} +
\TR^*(\tfrac{a + b}{2}, \tfrac{b}{a + b}) \]
\end{proposition}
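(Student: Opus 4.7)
The plan is to reduce the block-model problem to reconstruction on a Galton--Watson tree via Lemma~\ref{lemma:tree-coupling}, convert the sparse revealed boundary labels into a standard $\delta$-flip noise channel by filling in unrevealed slots with fresh fair coins, and then apply the robust reconstruction theorem of~\cite{MNS:2013a} to reach the target accuracy $\tfrac{1}{2} + \TR^*(d,\eta)$. First, by Lemma~\ref{lemma:tree-coupling}, for $r = r(n) \to \infty$ with $r = o(\log n)$ we couple $G_r(v)$ (together with its revealed subset) with a Galton--Watson tree $T_r$ of offspring distribution $\Pois(d)$ for $d = (a+b)/2$, broadcast parameter $\eta = b/(a+b)$, and independent $\mathrm{Bernoulli}(p)$ revelations. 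The hypothesis $(a-b)^2 > C(a+b)$ gives $d(1-2\eta)^2 = (a-b)^2/[2(a+b)] > C/2$, so by taking $C$ larger than the threshold of \cite{MNS:2013a} we secure the conclusion $\widetilde{\TR}^*(d,\eta) = \TR^*(d,\eta)$ for every flip rate $\delta \in [0, 1/2)$.

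The local algorithm $A$ is then: if $v \in R$, output $\sigma_v$ directly; otherwise extract $G_r(v)$ and, for each $u \in \partial G_r(v)$, set $\tilde\sigma_u = \sigma_u$ when $u \in R$ and draw $\tilde\sigma_u$ uniformly and independently from $\{\pm 1\}$ when $u \notin R$. Finally, run the optimal Bayesian estimator (belief propagation) for the broadcast process on the tree-like neighborhood $G_r(v)$ with boundary data $\tilde\sigma_{\partial G_r(v)}$, and output its prediction for $\sigma_v$. This is local because it queries only a radius-$r$ neighborhood with $r = o(\log n)$.

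For the analysis, conditioned on $\tau_u$ the pseudo-label $\tilde\tau_u$ equals $\tau_u$ with probability $p + (1-p)/2 = (1+p)/2$, independently across $u$, so on the tree side $\tilde\tau_{\partial T_r}$ is exactly the \cite{MNS:2013a} flip channel applied to $\tau_{\partial T_r}$ at rate $\delta = (1-p)/2 < 1/2$. By the robust reconstruction theorem, as $r \to \infty$ the Bayesian estimator recovers $\tau_\rho$ with accuracy tending to $\tfrac{1}{2} + \widetilde{\TR}^*(d,\eta) = \tfrac{1}{2} + \TR^*(d,\eta)$. Combining this with the trivially perfect case $v \in R$ gives
\[
\lim_{n \to \infty} \Pr[A(v) = \sigma_v] \;=\; p + (1-p)\bigl(\tfrac{1}{2} + \TR^*(d,\eta)\bigr),
\]
which converges to $\tfrac{1}{2} + \TR^*(d,\eta)$ as $p \to 0$.

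The main obstacle is conceptual rather than computational: the given information is erasure-type (labels either revealed exactly or absent), whereas \cite{MNS:2013a} is phrased for the symmetric flip channel. The fresh-coin substitution on unrevealed boundary nodes is what cleanly converts erasure at rate $1-p$ into flip noise at rate $(1-p)/2 < 1/2$ and thereby unlocks the theorem. A minor subtlety to check is that discarding the revealed labels strictly inside $G_r(v)$ is harmless in the limit: in the supercritical regime (which is ensured by $a + b > 2$), $|\partial T_r|$ dominates $|T_{r-1}|$ exponentially, so the boundary carries the overwhelming majority of revealed bits, and whatever benefit interior revelations could add vanishes as $p \to 0$.
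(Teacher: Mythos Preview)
Your proof is correct and takes essentially the same route as the paper: couple the local neighborhood to a Galton--Watson broadcast tree via Lemma~\ref{lemma:tree-coupling}, convert the erasure-type boundary information into the symmetric flip channel of~\cite{MNS:2013a} by filling unrevealed boundary slots with fresh fair coins (giving $\delta = (1-p)/2 < 1/2$), and invoke robust reconstruction to obtain $\widetilde{\TR}^*(d,\eta) = \TR^*(d,\eta)$. The only structural difference is that the paper also proves a matching upper bound on the posterior accuracy via the observation that $R_r = \emptyset$ with high probability as $p \to 0$, whereas your direct computation $p + (1-p)\bigl(\tfrac{1}{2}+\TR^*\bigr) \to \tfrac{1}{2}+\TR^*$ already yields the claimed equality for your specific algorithm $A$, so your closing worry about discarded interior revelations is unnecessary.
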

\begin{proof}
We consider the corresponding question on trees. Let $d = \tfrac{a + b}{2}$ and
$\eta = \tfrac{b}{a + b}$. Let $T$ be a Galton-Watson tree with offspring
distribution $\Poisson(d)$. Let $(T, \tau, R) \sim \T(T, 2, \eta, p)$, and let
$R_r = \{v \in R ~|~ d(\rho, v) \le r\}$ for some $r(p)$ such that
$r \to \infty$ as $p \to 0$. Our goal is to show
that whenever $d (1 - 2\eta)^2 > C$, where $C$ is the constant in the work
of~\cite{MNS:2013a},
\begin{align}
\lim_{(p,r) \rightarrow (0,\infty)} \E \left| \Pr[\tau_{\rho} = 1 ~|~ R_r, \tau_{R_r}]  - \frac{1}{2}
\right| &= \TR^*(d, \eta)  \label{eqn:prop3-goal}
\end{align}
To show~(\ref{eqn:prop3-goal}), fix some radius $r$, then notice that by the
monotonicity of conditional variances, $\Var(\tau_{\rho}~|~ R_r, \tau_{R_r},
\tau_{\partial T_r}) \leq \Var(\tau_{\rho} ~|~ R_r, \tau_{R_r})$. Consider
$\E[|R_r|]$. An easy calculation (see the proof of Theorem~\ref{thm:k_clusters}
for details), shows that $\E[|R_r|] = O(pd^r)$, thus when $r = -\tfrac{1}{2}
\log_d(p)$, the probability that $R_r \neq \emptyset$ goes to $0$ as $p
\rightarrow 0$ by Markov's inequality. Conditioning on the event that this is
indeed the case, $\Pr[\tau_{\rho} =1 ~|~ \tau_{\partial T_r}, \tau_{R_r}, R_r]
=\Pr [\tau_{\rho} = 1~|~\tau_{\partial T_r}]$.  Thus, we have,
\begin{align}
\lim_{(p, r) \rightarrow (0,\infty)} \Pr[\tau_{\rho} = 1~|~\tau_{\partial T_r},
\tau_{R_r}, R_r] = \lim_{r\rightarrow \infty} \Pr[\tau_{\rho} = 1~|~\tau_{\partial
T_r}]
\nonumber
\intertext{Using the above equation together with the fact that
$\Var(\tau_{\rho} ~|~ \tau_{\partial T_r},\tau_{R_r}, R_r)
\leq \Var(\tau_{\rho} ~|~
\tau_{R_r}, R_r)$, we get that,}
\lim_{p \rightarrow 0} \E\left| \Pr[\tau_{\rho}  = 1~|~ \tau_{R_r}, R_r] - \frac{1}{2}
\right| \leq \TR^*(d, \eta) \label{eqn:prop3-upperbound}
\end{align}

For the other direction, let $p > 0$ and fix some radius $r$. For $u \in
\partial T_r$ define the random variable $\tau^\prime_u = \tau_u$ if $u \in R$,
and $\tau^\prime_u \in \{0, 1\}$ uniformly at random if $u \not \in R$. Note
that conditioned on $\tau_{\rho}$, the random variables $\langle \tau^\prime_u
\rangle_{u \in \partial T_r}$ and the noisy labels, $\langle \tilde{\tau}_u
\rangle_{u \in \partial T_r}$ are identically distributed if $\delta =
\tfrac{1}{2} - \tfrac{p}{2}$. Again, we have that, $\Var(\tau_{\rho} ~|~ R_r,
\tau_{R_r}) = \Var(\tau_{\rho} ~|~ R_r, \tau_{R_r}, \tau^\prime_{\partial T_r \setminus
R_r}) \leq \Var(\tau_{\rho} ~|~ \tau^\prime_{\partial T_r})$, where the first
equality holds since $\tau^\prime_u$ for $u \not \in R$ is independent of
$\tau_{\rho}$ and the inequality holds by monotonicity of conditional variances.
By definition,
\begin{align}
\lim_{r \rightarrow \infty} \E \left| \Pr[\tau_{\rho} = 1 ~|~
\tau^\prime_{\partial T_r}] - \frac{1}{2} \right| = \widetilde{\TR}^*(d,
\eta) \nonumber
\intertext{ Using the above equation together with the relationships between the
variances, we have}
\lim_{p \rightarrow 0} \E\left| \Pr[\tau_{\rho} =1 ~|~ \tau_{R_r}, R_r] -
\frac{1}{2} \right| \geq \widetilde{\TR}^*(d, \eta) \label{eqn:prop3-lowerbound}
\end{align}

Combining~(\ref{eqn:prop3-upperbound}) and~(\ref{eqn:prop3-lowerbound}) together
with the result in~\cite{MNS:2013a}, we have that whenever $d (1 -
2\eta)^2 \geq C$,~(\ref{eqn:prop3-goal}) is true.

Finally, the mapping from the result on trees to the block model follows from a
coupling between local neighborhoods of nodes in the block model with the
broadcast process on trees. For details see Lemma~\ref{lemma:tree-coupling} and
its application in the proof of Theorem~\ref{thm:k_clusters}.

This implies the proposition, as we can take $A$ to be the Belief Propagation
algorithm (see \eg \cite{MNS:2013a}) with radius $r$, with nodes in $R$
initialized according to their labels and with nodes outside of $R$
initialized randomly. Note that belief propagation is known to converge on
trees.
\end{proof}

\subsection{Proof of Proposition~\ref{prop:local_count}}
\label{sec:recon}

Given an instance of the stochastic block model $(G, \sigma, R) \sim \G(n, k,
a, b, p)$ and the corresponding Galton-Watson tree and broadcast process $(T,
\tau, R) \sim \T(T, k, \eta, p)$, we now prove that if $d \lambda^2 =
(a-b)^2/(k(a + (k-1)b)) > 1$, the plurality of labels at distance $\ell$ from a
node $v$ provides a robust way to recover a $v$'s label for every information
$p$.  The argument is based on the reconstruction argument for the label of a
root in a broadcast process on trees, and the fact that the application of the
second moment method in this argument is robust to noise in the leaf labels.
This was implicit in \cite{MosselPeres:03} and more explicit
in~\cite{MNS:2013a}.  Interestingly, the proof will show that in the case of
Poisson Galton-Watson tree, a simple plurality style rule is sufficient for
reconstruction.

\medskip

\begin{proposition}
\label{prop:local_count}
Let $(G, \sigma, R) \sim \G(n, k, a, b, p)$, with $a + (k-1)b > k$.  Then,
there exists a constant $\epsilon = \epsilon(a,b,k,p)$, such that if $(a-b)^2 >
k(a + (k-1)b)$, there is a local algorithm $A$ such that if $A(v)$ denotes the
label output by the algorithm, for a random node $v$,
\[ 
\Pr[A(v) = \sigma_v] \ge \frac{1}{k} + \epsilon.
\]
The result also holds for the noisy-label model.
\end{proposition}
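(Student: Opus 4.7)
The plan is to reduce the block-model problem to an analogous reconstruction problem on a Galton--Watson tree and then apply a census/plurality-based local algorithm whose analysis goes through whenever the Kesten--Stigum bound $d\lambda^2 > 1$ holds, where $d = (a+(k-1)b)/k$ and $\lambda = 1 - k\eta = (a-b)/(a+(k-1)b)$. The hypothesis $(a-b)^2 > k(a+(k-1)b)$ is exactly $d \lambda^2 > 1$, which is the regime in which the census statistic has diverging signal-to-noise ratio.

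The local algorithm $A$ I would use is the following: for a radius $\ell = \ell(a,b,k,p)$ to be chosen later, for each node $v$ examine $G_\ell(v)$, let $N_i(v)$ be the number of nodes $u \in R \cap \partial G_\ell(v)$ with revealed label $\sigma_u = i$, and output $\arg\max_i N_i(v)$ (ties broken arbitrarily). By Lemma~\ref{lemma:tree-coupling}, for $\ell$ less than the coupling threshold, $(G_\ell(v), \sigma_{G_\ell(v)}, R\cap G_\ell(v))$ is coupled a.a.s. with $(T_\ell, \tau_{T_\ell}, R \cap T_\ell)$ where $(T,\tau,R) \sim \T(T,k,\eta,p)$ and $T$ is Poisson$(d)$ Galton--Watson. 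So it suffices to prove that on the tree, the plurality of revealed labels at depth $\ell$ agrees with $\tau_\rho$ with probability at least $1/k + \epsilon$ for some $\epsilon > 0$ independent of $n$.

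For the tree analysis, I would use the standard Kesten--Stigum second moment argument (as in~\cite{MosselPeres:03,MNS:2013a}) applied to the census vector $(N_1,\dots,N_k)$ at depth $\ell$. Decomposing the $k\times k$ transition matrix into the uniform direction (eigenvalue $1$) and its orthogonal complement (eigenvalue $\lambda$), one checks that $\E[N_i \mid \tau_\rho = j]$ has a component of order $\lambda^\ell d^\ell$ in the direction $\one[i=j] - 1/k$, while $\Var(N_i)$ is of order $d^\ell$. Revealing each leaf label independently with probability $p$ scales the mean by $p$ and the variance by at most a constant multiple of $p$, reducing the signal-to-noise ratio by a factor of $p$ but preserving its exponential growth in $\ell$ as long as $d \lambda^2 > 1$. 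Choosing $\ell$ large enough (depending on $p$, and small enough to stay within the coupling radius of Lemma~\ref{lemma:tree-coupling}, which is fine since we only need a constant $\ell$ depending on $a,b,k,p$) guarantees $N_{\tau_\rho} > \max_{i\neq \tau_\rho} N_i$ with probability bounded away from $1/k$ by a constant $\epsilon(a,b,k,p) > 0$. For the noisy-label model where each node independently receives its correct label with probability $\delta$ and a uniform label otherwise, the same census statistic still works: a uniformly random label contributes equally in expectation to all coordinates and only inflates the variance by a constant factor, so the signal-to-noise ratio is merely scaled by another constant.

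The main obstacle is verifying the robustness claim carefully: the census second moment argument is classical for the noiseless setting, and one must check that replacing the exact boundary labels $\tau_{\partial T_\ell}$ by the subsampled labels $\tau_{R \cap \partial T_\ell}$ (or the independently-noised labels) preserves both the signal scale $\lambda^\ell d^\ell$ (up to the factor $p$, respectively $2\delta - 1$ or similar) and the variance bound of order $d^\ell$. This is precisely the robustness content implicit in~\cite{MosselPeres:03} and made more explicit in~\cite{MNS:2013a}; I would cite those arguments rather than redo them. The remainder of the proof is then a union bound / Chebyshev argument on the tree, combined with Lemma~\ref{lemma:tree-coupling} to transfer the conclusion back to $G$.
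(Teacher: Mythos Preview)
Your proposal is correct and follows essentially the same route as the paper: reduce to the Poisson Galton--Watson tree via Lemma~\ref{lemma:tree-coupling}, replace unrevealed (or noisy) boundary labels by uniform random labels, and apply the Kesten--Stigum second-moment/census argument to the centered count vector, exactly as in~\cite{MosselPeres:03,MNS:2013a}. One small correction to your heuristic: the variance of $N_i$ is not $O(d^\ell)$ but rather $O\bigl(p^2(d^2\lambda^2)^\ell + d^\ell\bigr)$ because of inter-leaf correlations through the root (this is the $B_j$ term in the paper's explicit recursion), so the signal-to-noise ratio does not grow unboundedly but saturates at a constant $1/C$---still sufficient for Chebyshev, and handled correctly in the references you cite.
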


\begin{proof}
We consider the corresponding question of root reconstruction in a tree.  Let
$T$ be a Galton-Watson tree with offspring distribution $\Poisson(d)$ for $d =
\tfrac{a + (k-1)b}{k}$, and let $(T, \tau, R) \sim \T(T, k, \eta, p)$ for $\eta
= \tfrac{b}{a + (k-1)b}$. Consider the broadcast process on $T$.  

One representation of the broadcast process is that along each edge, each
symbol is copied probability $\lambda = 1- k \eta$ and is otherwise randomized
to one of the $k$ symbols.  Recall that the second eigenvalue of the broadcast
matrix in this case is given by $\lambda$.  Fix a level $\ell$ of the tree.
For any $v \in \partial T_{\ell}$, let $\tilde{\tau}_v = \tau_v$ with
probability $\mu = 1 - k\delta$ and is chosen randomly from $[k]$ otherwise. We
will assume that our algorithm has access to $\tilde{\tau}_{\partial T_\ell}$;
given $R$ and $\tau_R$, $\tilde{\tau}$ can be constructed by letting
$\tilde{\tau}_v = \tau_v$ if $v \in R$, and choosing randomly otherwise.  This
gives $\mu = p$ and $\delta = \tfrac{1-p}{k}$.

For each leaf node $v \in \partial T_{\ell}$, let $X^v$ denote a random vector
where $X^v_i = \one(\tilde{\tau}_v = i) - \tfrac{1}{k}$. Let $S_{\ell} =
\sum_{v \in \partial T_{\ell}} X^v$--in words, $S_\ell$ is a vector whose
positive entry is the plurality of label colors at level $\ell$.

Note that when the color of the root is chosen uniformly at random,
$\E[S_{\ell}] =0$.  Let $\E^i$ denote expectations conditioned on $\tau_{\rho}
= i$. Then, note that 
\[ 
\E^i[S_{\ell}] = \mu (d\lambda)^{\ell} (e_i - \tfrac{1}{k}1_k), 
\] 
where $e_i$ denotes the unit vector with $1$ in the $i$th co-ordinate and $1_k$
is the all-ones vector. This follows from the fact that $\E^i[X^v] =
\mu\lambda^\ell (e_i - \tfrac{1}{k} 1_k)$, since $(e_i - \tfrac{1}{k}1_k)$ is
an eigenvector of the broadcast matrix with eigenvalue $\lambda$.

We would like to bound $\E^i[S_\ell^2]$ in order to apply the second moment method.

We control the second moment by induction.  Let 
\[
	B_{j} = \max_{i \in [k]} |\E^i[({S_j}_i) ({S'_j}_i)]| 
\]
where $S_j$ and $S'_j$ are the sums corresponding to two sibling sub-trees of
$j$ levels each (so that the root labels of the trees of $S_j$ and $S'_j$ are
correlated).  Similarly, let 
 \[
	 A_{j} = \max_{i\in[k]} \E^i[({S_{j}}_i)^2].
\]

We obtain a recurrence for the value of $A_j$ by considering the contribution
from subtrees rooted at the root's children (where we have applied the triangle
inequality):
\[
A_{j} ~\leq~ \E[D(D-1)] B_{j-1} + \E[D] A_{j-1}, 
\]
and 
\[
B_{j} ~\leq~ \E[D]^2 \lambda^2 B_{j-1},
\]
where $D$ is a random variable corresponding to the degree of the root.  We can
bound the initial values of the recurrence by: 
\[
B_0 \leq \mu^2 \lambda^2, \quad A_0 = 1.
\]
Since $\E[D] = d$, it is easy to solve for $B_j$ and get 
\[
B_{j} ~\leq~ \mu^2 d^{2 j} \lambda^{2 j + 2}
\]
Plugging this back to $A_{j}$ and using the fact that the variance and expected
value of a Poisson variable are identical, we get that 
\[
A_{j} ~\leq~ d^2 \mu^2 d^{2 j - 2} \lambda^{2 j} + d A_{j-1} 
~=~ 
\mu^2 d^{2 j} \lambda^{2 j} + d A_{j-1} 
\]
which then implies: 
\[
A_{j} ~\leq~ \mu^2 d^{j} (\sum_{i=1}^{j} (d \lambda^2)^{i}) + d^{j}.
\]
Since $d \lambda^2 > 1$ the expression above is bounded by 
\[
A_{j} ~\leq~ C \mu^2 d^{j} (d \lambda^2)^{j} + d^{j},
\] 
for some absolute constant $C = C(d\lambda^2)$. Thus if we look at the
difference of means: 
\[
	\E^i[{S_\ell}_i] - \E^j[{S_\ell}_i] ~=~ \mu(d \lambda)^{\ell},
\]
and the second moment is bounded above by
\[
	\E[{S_\ell}_i^2] ~\le~ C \mu^2 d^{\ell} (d \lambda^2)^{\ell} + d^{\ell},
\]
irrespective of the label of the root.  Thus as $\ell \to \infty$, the ratio
between the square of the first moment and the second moment is bounded below
by $\tfrac{1}{C}$ for every value of $\mu \neq 0$.  Thus by the standard
application of the second moment method (see \eg Proposition 7.8
in~\cite{LPW:2006}), the label of $\rho$ is
reconstructable with probability at least $\tfrac{1}{k} + \epsilon$, for some
constant $\epsilon = \epsilon(C)$. The
proof follows by applying the coupling from Lemma ~\ref{lemma:tree-coupling}.
\end{proof}

\section{Conjecture}
\label{sec:expts}
\subsection{The Uselessness of Global Information}

In the case of two clusters, we conjecture that whenever any node label
information is present, a local algorithm is already able to recover the
clusters optimally.  The algorithm is the following: Fix some radius $r$, for
each $v \in G$, look at the neighborhood $G_r(v)$, let $R_r \subseteq G_r(v)$
denote the revealed nodes in the neighborhood. As long as $r \leq c \log(n)$
for a sufficiently small constant $c$, the neighborhood is a tree with high
probability. Then $\Pr[\sigma_v = 1 ~|~ R_r, \sigma_{R_r}]$ can be computed
exactly by belief propagation. We conjecture that this is optimal. This would
follow from a related conjecture regarding the broadcast process on trees and an
application of Lemma~\ref{lemma:tree-coupling}. \medskip

\begin{conjecture}
Let $T$ be infinite tree with root $\rho$. Let $(T, \tau, R) \sim \T(T, 2, \eta,
p)$ (see Section~\ref{sec:model}). Then for any $p > 0$ and $\eta < 1/2$,
\[
\lim_{r \to \infty} \E\big| \Pr[\tau_{\rho = 1} ~|~ \tau_{R}] - \Pr[\tau_{\rho
= 1} ~|~ \tau_{R}, \tau_{\partial T_r}] \big| = 0.
\]
\end{conjecture}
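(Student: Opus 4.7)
The plan is to exploit the tree Markov property to pin down precisely which depth-$r$ observations carry information about $\tau_\rho$ beyond what is already in $\tau_R$, and then argue that this residual information vanishes as $r \to \infty$. First I would observe that if the path in $T$ from $\rho$ to a node $u \in \partial T_r$ contains any revealed node $v \in R$ (with $v \neq \rho$), then $\tau_u \perp \tau_\rho \mid \tau_v$ by the Markov property of the broadcast process, and since $\tau_v$ is determined by $\tau_R$, we have $\tau_u \perp \tau_\rho \mid \tau_R$. Letting $F_\rho$ denote the connected component of $\rho$ in $T \setminus (R \setminus \{\rho\})$, this yields
\[
\Pr[\tau_\rho = 1 \mid \tau_R, \tau_{\partial T_r}] ~=~ \Pr[\tau_\rho = 1 \mid \tau_R, \tau_{\partial T_r \cap F_\rho}],
\]
so the only observations that can add information are the labels at depth-$r$ nodes reachable from $\rho$ by a path avoiding $R$ entirely. (The case $\rho \in R$ is trivial since then $\tau_\rho$ is exactly known.)

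Next I would analyze the structure of $F_\rho$: at each node of $T$, a child lies in $F_\rho$ only if it is not in $R$, which happens independently with probability $1-p$. When $T$ is Galton-Watson with $\Pois(d)$ offspring, $F_\rho$ is again Galton-Watson, with effective offspring distribution $\Pois(d(1-p))$. In the subcritical regime $d(1-p) \le 1$, $F_\rho$ is a.s.\ finite, so $\partial T_r \cap F_\rho = \emptyset$ for all sufficiently large $r$ and the conjecture follows immediately. The substantive case is therefore $d(1-p) > 1$, or more generally, trees in which $F_\rho$ remains nonempty at arbitrary depths with positive probability.

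In this regime, I would reformulate the problem via the posterior $\mu_R = \Pr[\cdot \mid \tau_R]$: restricted to $F_\rho$, $\mu_R$ is a Markov random field whose boundary conditions at nodes adjacent to $F_\rho$ are the observed labels in $\tau_R$. The conjecture reduces to showing that, under $\mu_R$, $\tau_\rho$ asymptotically decorrelates from $\tau_{\partial T_r \cap F_\rho}$. A natural tool is the second moment method: setting $X_r = \Pr[\tau_\rho = 1 \mid \tau_R, \tau_{\partial T_r}]$ and $W = \Pr[\tau_\rho = 1 \mid \tau_R]$, the tower property gives $W = \E[X_r \mid \tau_R]$, so $\E(X_r - W)^2 = \E[X_r^2] - \E[W^2]$, and by Cauchy-Schwarz it suffices to show $\E[X_r^2] \to \E[W^2]$. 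One could attempt to bound $\E[X_r^2]$ by expanding over pairs of paths in $F_\rho$ from $\rho$ to $\partial T_r \cap F_\rho$ and running recursions analogous to those in the proof of Proposition~\ref{prop:local_count}, with the branching rate $d(1-p)$ in place of $d$ and with additional pinning terms coming from the revealed children that bias interior labels.

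The main obstacle is that the boundary conditions placed on $F_\rho$ by $\tau_R$ are random and intricately correlated with the broadcast process on the interior of $F_\rho$, so a bare-tree reconstruction-threshold calculation does not suffice. Intuitively, even when $d(1-p)(1-2\eta)^2$ exceeds the Kesten-Stigum bound for the thinned tree, the revealed labels on the remainder of $T$ should already saturate the posterior on $\tau_\rho$, making any additional information carried by the depth-$r$ boundary of $F_\rho$ asymptotically redundant. Turning this saturation picture into a rigorous argument---perhaps via a monotonicity or coupling that respects the random reveal structure, or via a uniform-in-$r$ spectral bound on the posterior covariance---seems to require genuinely new ideas beyond the techniques of Sections~\ref{sec:many_clusters} and~\ref{sec:two_clusters}, which is presumably why the statement is left as a conjecture rather than a theorem.
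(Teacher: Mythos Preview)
Your proposal is not a completed proof, and you correctly recognize this in your final paragraph. The essential point to make explicit is that the paper does not prove this statement either: it is stated as a \emph{conjecture}, and the only supporting evidence offered is the simulation study in Appendix~\ref{sec:expts} (belief propagation on $3$-regular trees of depth $10$, comparing the posteriors $p_R$ and $p_{R,L}$). There is no proof in the paper to compare your attempt against.

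That said, the partial reduction you carry out is sound. The Markov-property argument showing that, conditionally on $\tau_R$, only the leaves in $\partial T_r \cap F_\rho$ can contribute information about $\tau_\rho$ beyond what $\tau_R$ already provides is correct, and your disposal of the subcritical case $d(1-p) \le 1$ for the $\Pois(d)$ Galton-Watson tree is valid. But as you yourself identify, the supercritical regime is where the content lies, and the second-moment/recursion approach you sketch does not close: the random boundary conditions that $\tau_R$ imposes on $F_\rho$ are correlated with the broadcast process inside $F_\rho$, so one cannot simply appeal to a reconstruction threshold for the thinned tree. Your ``saturation'' intuition is precisely the assertion of the conjecture, and the authors leave it open for the same reason.

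One minor caveat: the conjecture as written is for a general infinite rooted tree $T$, not only the Galton-Watson case, so your subcritical shortcut via the $\Pois(d(1-p))$ branching process is specific to that setting; for arbitrary $T$ there is no comparable criterion guaranteeing $F_\rho$ is finite.
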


\subsection{Simulation}

To test this conjecture, we ran the Belief Propagation algorithm on $3$-regular
trees of depth $10$, in which labels were assigned to nodes according to
broadcast processes starting at the root. Let $L$ denote the set of leaves at
level $10$. Each node in the interior was revealed independently with
probability $p$, to get the set $R$. We considered $p \in \{ 0.01, 0.05, 0.10,
0.20\}$. We also tried various settings of the broadcast parameter, $\eta$. We
chose $\eta \in \{0.1,\eta_c , 0.3, 0.4\}$, where $\eta_c = \frac{1}{2}\left(1 -
\frac{1}{\sqrt{3}}\right)$ is the threshold value for the setting considered.

The labeling process was always initiated with the root having label $1$. Thus,
we were interested in the posterior probability of the root being labeled $1$ in
various cases.  We computed this posterior probability in three cases: (i) using
only the labels at the leaves, denoted by $p_L$ (ii) using only the interior
nodes, denoted $p_R$, and (iii) using both the leaves and the interior nodes,
denoted by $p_{L, R}$.

In the first case, only global information is used---\ie the set of labels at the
boundary is the maximum possible information that can be inferred using the
global properties of the graph. Thus, in some sense this is an upper bound on
the utility of global information.  In the second case, only local
information in the form revealed nodes in the neighborhood is used. Finally, in
the the third case, both local and global information is used.

Our conjecture suggests that as $r \to \infty$, $|p_{R, L} - p_{R}| \rightarrow
0$. Figure \ref{fig:localvglobal} shows our results. Each plot corresponds to
a fixed value of $\eta$, and displays the average distance
$|p_{R,L} - p_R|$ for different values of $p$. We ran the simulation multiple
times for each setting of $p$ and $\eta$ and the standard deviation is marked on
the plot.

\begin{figure}
\begin{center}
\includegraphics[width=0.9\textwidth]{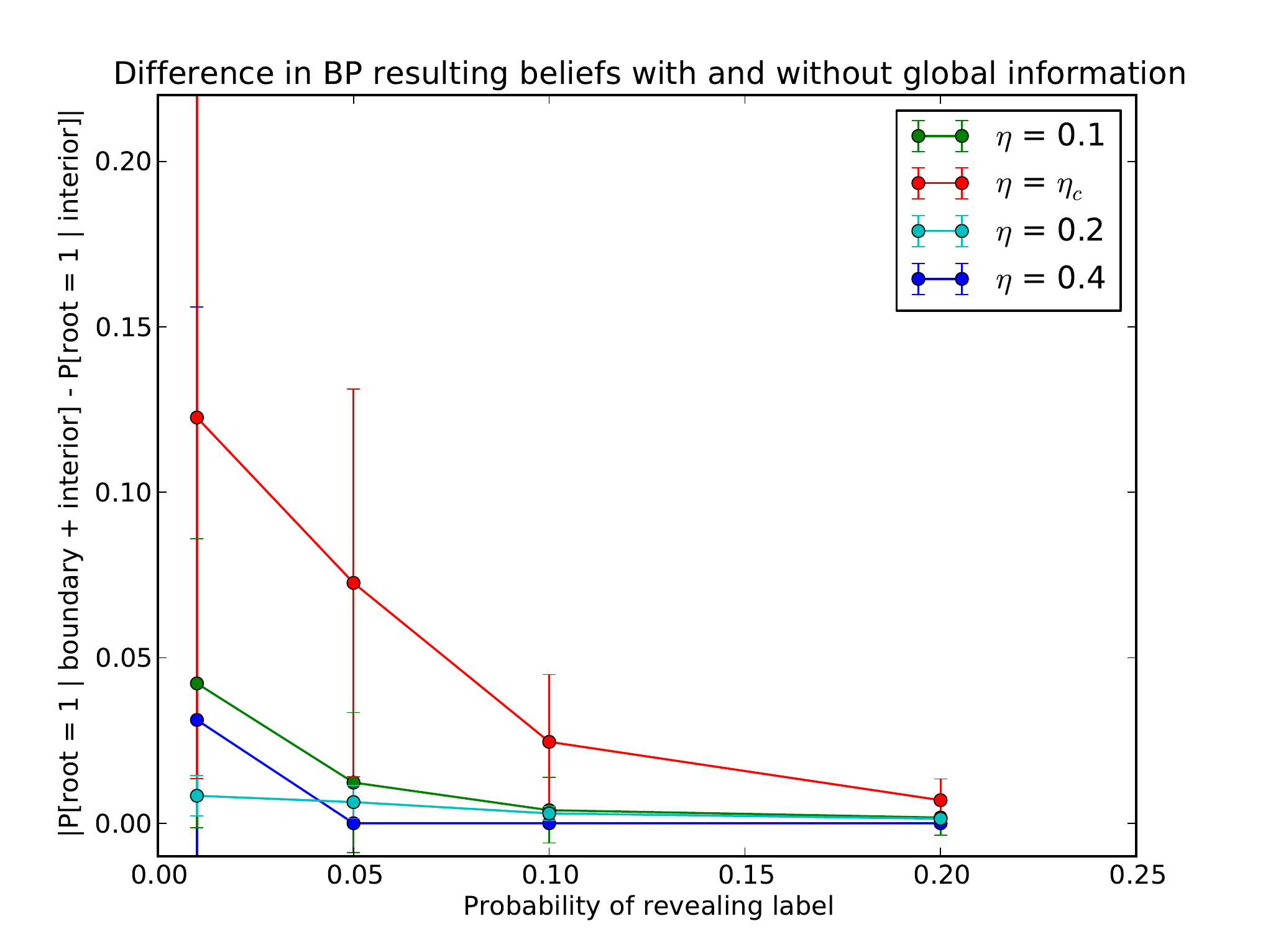}
\caption{The average distance $|p_{R,L} - p_R|$ is shown for
$\eta = 0.1, \eta_c, 0.3, 0.4$ and $p = 0.01, 0.05, 0.1, 0.2$.}
\label{fig:localvglobal}
\end{center}
\end{figure}

\end{document}